\theoremstyle{plain}	
\newtheorem{theorem}{Theorem}[section]
\newtheorem{proposition}[theorem]{Proposition}
\theoremstyle{definition} 
\newtheorem{definition}[theorem]{Definition}
\newtheorem{example}[theorem]{Example}
\title{On 2-dimensional Dijkgraaf-Witten Theory with Defects}
\author{Aria L. Dougherty}
\address[Aria L. Dougherty]{Mary Baldwin College, Staunton, VA 24401 }
\email{doughertyal4843@mbc.edu}
\author{Hwajin Park}
\address[Hwajin Park]{University of Virginia, Charlottesville, VA 22904 }
\email{hp3aq@virginia.edu}
\author{David N. Yetter}
\address[David N. Yetter]{Kansas State University, Manhattan, KS 66506}
\email[Corresponding author]{dyetter@math.ksu.edu}
\begin{document}
\tikzset{midarrow/.style={
		decoration={markings,
			mark= at position 0.5 with {\arrow{#1}},
		},
		postaction={decorate}
	}
}

\begin{abstract}In this paper, we provide a construction of a state-sum model for finite gauge-group Dijkgraaf-Witten theory on surfaces with codimension 1 defects.   The construction requires not only that the triangulation be subordinate to the filtration, but flag-like:  each simplex of the triangulation is either disjoint from the defect curve, or intersects it in a closed face.  The construction allows internal degrees of freedom in the defect curves by introducing a second gauge-group from which edges of the curve are labeled in the state-sum construction.   Edges incident with the defect, but not lying in it, have states lying in a set with commuting actions of the two gauge-groups.  We determine the appropriate generalizations of the 2-cocycles specifying twistings of defect-free 2D Dijkgraaf-Witten theory.  Examples arising by restriction of group 2- cocycles, and constructed from characters of the 2-dimensional guage group are presented.  This research was carried out at Summer Undergraduate Mathematics Research (SUMaR) math REU at Kansas State University, funded by NSF under DMS award \#1262877.
\end{abstract}

\maketitle

\section{Introduction}
One standard construction of topological quantum field theories is given by an analogue of lattice-gauge theories:  state-sum constructions on triangulated manifolds (cf., for instance \cite{TV}, \cite{W}, \cite{Fr}) in which simplexes of various dimension are labeled or ``colored'' with combinatorial or algebraic data, sometimes called``local states'', subject to conditions traditionally termed ``admissibility'', values are computed from the local states and multiplied over the whole triangulation, and these products in turn are summed over all admissible labelings -- or alternatively over all labelings with the local state value of inadmissible colorings set to zero. 
The resulting state-sum is then normalized so as to be invariant under changes to the triangulation within the same piecewise linear (PL) structure on the space, giving a topological invariant, which by standard constructions (cf. \cite{Y}) is seen to arise from a topological quantum field theory in the sense of Atiyah \cite{Atiyah}.

In $n$-dimensional Dijkgraaf-Witten theory, the $n$-simplex edges are labeled with elements from a finite group, and the state values are defined by an $n$-dimensional group cocycle \cite{W}, \cite{Fr}.  It has been argued by Fuchs et al. \cite{Fu} at the physical level of rigor that the extension of a TQFT by the inclusion of defects -- submanifolds of lower dimension across which the bulk structure of the TQFT does not extend -- will naturally result in data on defects having an action of the bulk data.  In the case of Dikjgraaf-Witten theory, this suggests that defects will have data corresponding to a set with an action of the bulk gauge group.  In this paper we introduce a curve $C$ to the surface $\Sigma$ and find an invariant of $\Sigma\supseteq C$ by way of the combinatorial description of Dijkgraaf-Witten theory.

Throughout this paper, all curves and surfaces are oriented, compact, without boundary, and, for simplicity, equipped with a specified PL structure.

\section{Triangulations of surfaces with curves}

It turns out that merely requiring the curve be a subcomplex of the triangulated surface is insufficient.  We require that triangulations of surfaces with curves be flag-like:

\begin{definition}
A triangulation $\mathcal{T}$ of a surface with curve $\Sigma\supseteq C$ is \textit{flag-like} if $C$ is a subcomplex, and if, for every 2-simplex $\sigma\in\mathcal{T}$, the intersection of $\sigma$ and $C$ is either a face (i.e. one vertex or one edge) or is empty.
\end{definition}

Note that the definition given is specific to a surface containing a curve.  The notion is more general and applies to filtered piecewise linear spaces, in which case the condition for a triangulation to be flag-like is that the induced filtration of each simplex be a (possibly incomplete) simplicial flag (cf. literature on intersection (co)homology, where the notion first arose, as, for example \cite{Friedman}) hence the name.

The restriction to flag-like triangulations is very mild, since it is easy to see that the barycentric subdivision of any triangulation for which the subspaces of a filtration are subcomplexes is flag-like.

In all figures throughout the rest of the paper, shaded edges and vertices indicate those lying in the curve.

Three distinct types of 2-simplexes may occur in flag-like triangulations of a surface with curve:
\begin{align*}
\setlength{\tabcolsep}{1.5em}{
\begin{tabular}{c c c}
	\begin{tikzpicture}[thick] 
		\node[circle, fill, inner sep=.8pt, outer sep=0pt] (A) at (0,0){};
		\node[circle, fill, inner sep=.8pt, outer sep=0pt] (B) at (1,1.73205){};
		\node[circle, fill, inner sep=.8pt, outer sep=0pt] (C) at (2,0){};
		\draw (A)--(B)--(C)--(A);
	\end{tikzpicture}
	&
	\begin{tikzpicture}[thick]
		\node[circle, fill, inner sep=.8pt, outer sep=0pt] (A) at (0,0){};
		\node[circle, fill, inner sep=.8pt, outer sep=0pt] (B) at (1,1.73205){};
		\node[circle, fill, inner sep=.8pt, outer sep=0pt] (C) at (2,0){};
		\draw (A)--(B)--(C)--(A);
		\fill [red, opacity=.7] (A) circle (3pt);
	\end{tikzpicture}
	&
	\begin{tikzpicture}[thick]
		\node[circle, fill, inner sep=.8pt, outer sep=0pt] (A) at (0,0){};
		\node[circle, fill, inner sep=.8pt, outer sep=0pt] (B) at (1,1.73205){};
		\node[circle, fill, inner sep=.8pt, outer sep=0pt] (C) at (2,0){};
		\draw (A)--(B)--(C)--(A);
		\draw[line width=2pt, red, opacity=.7] (A)--(B);
		\fill [red, opacity=.7] (A) circle (3pt);
		\fill [red, opacity=.7] (B) circle (3pt);
	\end{tikzpicture}
\end{tabular}
}
\end{align*}

Alexander, in his seminal paper \cite{A}, showed that two PL spaces are PL homeomorphic if and only if they admit iterated stellar subdivisions with the same combinatorial type.  Usually this result is stated in a form which does not require the reaching of a common subdivision, but allows a sequences of stellar subdivisions and their inverses (welds), both of which are now commonly called ``Alexander moves,'' but we will have cause to use the full force of the result.  Thus topological invariance of some quantity or object computed or constructed from a triangulation may be shown by its invariance under the Alexander moves.

The Alexander moves applicable to surfaces, are depicted below.

$$\begin{tikzpicture}[thick] 
	\node[circle, fill, inner sep=.8pt, outer sep=0pt] (A) at (0,0){};
	\node[circle, fill, inner sep=.8pt, outer sep=0pt] (B) at (1,1.73205){};
	\node[circle, fill, inner sep=.8pt, outer sep=0pt] (C) at (2,0){};
	\draw (A)--(B)--(C)--(A);		
	\draw [<->] (2,.866)--(2.5,.866);
\end{tikzpicture}
\begin{tikzpicture}[thick] 
	\node[circle, fill, inner sep=.8pt, outer sep=0pt] (A) at (0,0){};
	\node[circle, fill, inner sep=.8pt, outer sep=0pt] (B) at (1,1.73205){};
	\node[circle, fill, inner sep=.8pt, outer sep=0pt] (C) at (2,0){};
	\node[circle, fill, inner sep=.8pt, outer sep=0pt] (D) at (1,.57735){};
	\draw (A)--(B)--(D)--(A)--(C)--(D)--(B)--(C);
\end{tikzpicture}$$

$$\begin{tikzpicture}[thick] 
	\node[circle, fill, inner sep=.8pt, outer sep=0pt] (A) at (0,0){};
	\node[circle, fill, inner sep=.8pt, outer sep=0pt] (B) at (1,1.73205){};
	\node[circle, fill, inner sep=.8pt, outer sep=0pt] (C) at (2,0){};
	\node[circle, fill, inner sep=.8pt, outer sep=0pt] (D) at (3,1.73205){};
	\draw (B)--(D)--(C)--(A)--(B)--(C);
	\draw [<->] (3,.866)--(3.5,.866);
\end{tikzpicture}
\begin{tikzpicture}[thick] 
	\node[circle, fill, inner sep=.8pt, outer sep=0pt] (A) at (0,0){};
	\node[circle, fill, inner sep=.8pt, outer sep=0pt] (B) at (1,1.73205){};
	\node[circle, fill, inner sep=.8pt, outer sep=0pt] (C) at (2,0){};
	\node[circle, fill, inner sep=.8pt, outer sep=0pt] (D) at (3,1.73205){};
	\node[circle, fill, inner sep=.8pt, outer sep=0pt] (E) at (1.5,.866){};
	\draw (B)--(D)--(C)--(A)--(B)--(C)--(D)--(A);
\end{tikzpicture}$$

Once a curve is introduced, it is easy to see that if an Alexander subdivision is applied to a flag-like triangluation, the resulting triangulation will be flag-like.  This is not the case with welds, as for example
in the two depicted below

$$\begin{tikzpicture}[thick] 
	\node[circle, fill, inner sep=.8pt, outer sep=0pt] (A) at (0,0){};
	\node[circle, fill, inner sep=.8pt, outer sep=0pt] (B) at (1,1.73205){};
	\node[circle, fill, inner sep=.8pt, outer sep=0pt] (C) at (2,0){};
	\node[circle, fill, inner sep=.8pt, outer sep=0pt] (D) at (1,.57735){};
	\draw (A)--(B)--(D)--(A)--(C)--(D)--(B)--(C);
                  \draw[line width=2pt, red, opacity=.7] (A)--(B);
                  \draw[line width=2pt, red, opacity=.7] (B)--(C);
		\fill [red, opacity=.7] (A) circle (3pt);
		\fill [red, opacity=.7] (B) circle (3pt);
		\fill [red, opacity=.7] (C) circle (3pt);
         \draw [->] (2,.866)--(2.5,.866);
\end{tikzpicture}
\begin{tikzpicture}[thick] 
	\node[circle, fill, inner sep=.8pt, outer sep=0pt] (A) at (0,0){};
	\node[circle, fill, inner sep=.8pt, outer sep=0pt] (B) at (1,1.73205){};
	\node[circle, fill, inner sep=.8pt, outer sep=0pt] (C) at (2,0){};
	\draw (A)--(B)--(C)--(A);
                  \draw[line width=2pt, red, opacity=.7] (A)--(B);
                  \draw[line width=2pt, red, opacity=.7] (B)--(C);
		\fill [red, opacity=.7] (A) circle (3pt);
		\fill [red, opacity=.7] (B) circle (3pt);
		\fill [red, opacity=.7] (C) circle (3pt);
\end{tikzpicture}
$$

$$\begin{tikzpicture}[thick] 
	\node[circle, fill, inner sep=.8pt, outer sep=0pt] (A) at (0,0){};
	\node[circle, fill, inner sep=.8pt, outer sep=0pt] (B) at (1,1.73205){};
	\node[circle, fill, inner sep=.8pt, outer sep=0pt] (C) at (2,0){};
	\node[circle, fill, inner sep=.8pt, outer sep=0pt] (D) at (3,1.73205){};
	\node[circle, fill, inner sep=.8pt, outer sep=0pt] (E) at (1.5,.866){};
	\draw (B)--(D)--(C)--(A)--(B)--(C)--(D)--(A);
		\fill [red, opacity=.7] (B) circle (3pt);
		\fill [red, opacity=.7] (C) circle (3pt);
	\draw [->] (3,.866)--(3.5,.866);
\end{tikzpicture}
\begin{tikzpicture}[thick] 
	\node[circle, fill, inner sep=.8pt, outer sep=0pt] (A) at (0,0){};
	\node[circle, fill, inner sep=.8pt, outer sep=0pt] (B) at (1,1.73205){};
	\node[circle, fill, inner sep=.8pt, outer sep=0pt] (C) at (2,0){};
	\node[circle, fill, inner sep=.8pt, outer sep=0pt] (D) at (3,1.73205){};
	\draw (B)--(D)--(C)--(A)--(B)--(C);
		\fill [red, opacity=.7] (B) circle (3pt);
		\fill [red, opacity=.7] (C) circle (3pt);
\end{tikzpicture}
$$

We distinguish moves which preserve flag-likeness by making

\begin{definition}
A combinatorial move applicable to a flag-like triangulation is {\em flag-like} if its final state is also a flag-like triangulation.
\end{definition}

We then have

\begin{theorem} \label{Alexander_moves_suffice}
If $(\Sigma_i \supseteq C_i, {\mathcal T}_i)$ for $i = 1, 2$ are two surface-curve pairs equipped with flag-like triangulations, there is a PL-homeomorphism from $\Sigma_1$ to $\Sigma_2$ carrying $C_1$ to $C_2$ if and only if there is sequence of flag-like Alexander moves applicable to ${\mathcal T}_1$ which results in a triangulation combinatorially isomorphic to ${\mathcal T}_2$.
\end{theorem}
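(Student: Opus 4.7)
The ``if'' direction is formal: a sequence of Alexander moves does not change the underlying PL space, and when its terminal triangulation is combinatorially isomorphic to $\mathcal{T}_2$, the induced identification $\Sigma_1 \to \Sigma_2$ is PL and carries $C_1$ to $C_2$ because $C_i$ is a subcomplex of each triangulation.

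For the converse, let $\phi:\Sigma_1\to\Sigma_2$ be a PL homeomorphism with $\phi(C_1)=C_2$, and transport $\mathcal{T}_2$ back along $\phi$ to obtain a triangulation $\phi^{-1}(\mathcal{T}_2)$ of $\Sigma_1$ combinatorially isomorphic to $\mathcal{T}_2$. Since $\phi$ respects the curves and $\mathcal{T}_2$ is flag-like, $\phi^{-1}(\mathcal{T}_2)$ is a flag-like triangulation of $(\Sigma_1\supseteq C_1)$. We now have two flag-like triangulations of the same PL surface-with-curve, and it suffices to connect them by flag-like Alexander moves.

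Invoke Alexander's theorem in the ``full force'' common-subdivision form cited earlier: there exist iterated stellar subdivisions $\mathcal{T}_1 = \mathcal{T}_1^{(0)} \to \cdots \to \mathcal{T}_1^{(k)}$ and $\phi^{-1}(\mathcal{T}_2) = \mathcal{T}_2^{(0)} \to \cdots \to \mathcal{T}_2^{(\ell)}$ such that $\mathcal{T}_1^{(k)}$ and $\mathcal{T}_2^{(\ell)}$ are combinatorially isomorphic. Because subdivision preserves flag-likeness (as observed in the excerpt), every $\mathcal{T}_i^{(j)}$ is flag-like. Concatenating the first sequence with the reverse of the second yields a sequence of Alexander moves from $\mathcal{T}_1$ to a triangulation combinatorially isomorphic to $\mathcal{T}_2$. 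The subdivisions in the forward half are flag-like by construction; each weld in the reversed half goes between two flag-like triangulations $\mathcal{T}_2^{(j+1)}$ and $\mathcal{T}_2^{(j)}$, and hence is itself flag-like in the sense of the paper's definition.

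The main obstacle, and the reason the common-subdivision form of Alexander's theorem is needed, is that arbitrary welds can destroy flag-likeness, as the excerpt's own pictures show. Our argument sidesteps this by never performing an unconstrained weld: every weld used is the reverse of a subdivision in a sequence that began at a flag-like triangulation, so both its source and target are automatically flag-like. The only routine verification underpinning the whole plan is the case check that a stellar subdivision of a 2-simplex or an edge in a flag-like triangulation produces a flag-like triangulation, which is settled by inspecting whether the subdivided simplex meets $C$ in the empty set, a vertex, or an edge.
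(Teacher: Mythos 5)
Your proposal is correct and follows essentially the same route as the paper: invoke the full (common-subdivision) form of Alexander's theorem, observe that stellar subdivisions preserve flag-likeness, and note that the welds in the reversed half of the concatenated sequence are automatically flag-like because both their source and target arise as subdivisions of a flag-like triangulation. The extra care you take in transporting $\mathcal{T}_2$ back along the homeomorphism and in justifying the trivial direction is a welcome elaboration of details the paper leaves implicit, but it is not a different argument.
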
 

\begin{proof} The forward implication follows from the full force of Alexander's theorem -- that two complexes are PL-homeomorphic if and only if there is a common subdivision by Alexander subdivisions --  and the observation that subdivisions of flag-like triangulations are always flag-like:  starting at each of
${\mathcal T}_i$, $i = 1,2$, apply Alexander subdivisions (necessarily flag-like) until a common combinatorial type of triangulation is reached.  The sequence in the theorem is then the sequence of subdivisions beginning at ${\mathcal T}_1$ followed by the welds reversing the subdivisions starting at
${\mathcal T}_2$ in reverse order.  The converse is trivial.
\end{proof}

Exhaustive consideration of cases shows that there are ten combinatorial types of flag-like Alexander subdivision moves (and thus ten corresponding types of flag-like Alexander welds).  They are depicted below:

$$\begin{tikzpicture}[thick] 
	\node[circle, fill, inner sep=.8pt, outer sep=0pt] (A) at (0,0){};
	\node[circle, fill, inner sep=.8pt, outer sep=0pt] (B) at (1,1.73205){};
	\node[circle, fill, inner sep=.8pt, outer sep=0pt] (C) at (2,0){};
	\draw (A)--(B)--(C)--(A);
	\draw [<->] (2,.866)--(2.5,.866);
\end{tikzpicture}
\begin{tikzpicture}[thick] 
	\node[circle, fill, inner sep=.8pt, outer sep=0pt] (A) at (0,0){};
	\node[circle, fill, inner sep=.8pt, outer sep=0pt] (B) at (1,1.73205){};
	\node[circle, fill, inner sep=.8pt, outer sep=0pt] (C) at (2,0){};
	\node[circle, fill, inner sep=.8pt, outer sep=0pt] (D) at (1,.57735){};
	\draw (A)--(B)--(D)--(A)--(C)--(D)--(B)--(C);
\end{tikzpicture}$$

$$\begin{tikzpicture}[thick] 
	\node[circle, fill, inner sep=.8pt, outer sep=0pt] (A) at (0,0){};
	\node[circle, fill, inner sep=.8pt, outer sep=0pt] (B) at (1,1.73205){};
	\node[circle, fill, inner sep=.8pt, outer sep=0pt] (C) at (2,0){};
	\draw (A)--(B)--(C)--(A);
	\fill[red, opacity=.7] (A) circle (3pt);
	\draw [<->] (2,.866)--(2.5,.866);
\end{tikzpicture}
\begin{tikzpicture}[thick] 
	\node[circle, fill, inner sep=.8pt, outer sep=0pt] (A) at (0,0){};
	\node[circle, fill, inner sep=.8pt, outer sep=0pt] (B) at (1,1.73205){};
	\node[circle, fill, inner sep=.8pt, outer sep=0pt] (C) at (2,0){};	
	\node[circle, fill, inner sep=.8pt, outer sep=0pt] (D) at (1,.57735){};
	\draw (A)--(B)--(D)--(A)--(C)--(D)--(B)--(C);
	\fill[red, opacity=.7] (A) circle (3pt);
\end{tikzpicture}$$

$$\begin{tikzpicture}[thick] 
	\node[circle, fill, inner sep=.8pt, outer sep=0pt] (A) at (0,0){};
	\node[circle, fill, inner sep=.8pt, outer sep=0pt] (B) at (1,1.73205){};
	\node[circle, fill, inner sep=.8pt, outer sep=0pt] (C) at (2,0){};
	\draw (A)--(B)--(C)--(A);
	\draw[line width=2pt, red, opacity=.7] (A)--(B);
	\fill[red, opacity=.7] (A) circle (3pt);
	\fill[red, opacity=.7] (B) circle (3pt);
	\draw [<->] (2,.866)--(2.5,.866);
\end{tikzpicture}
\begin{tikzpicture}[thick] 
	\node[circle, fill, inner sep=.8pt, outer sep=0pt] (A) at (0,0){};
	\node[circle, fill, inner sep=.8pt, outer sep=0pt] (B) at (1,1.73205){};
	\node[circle, fill, inner sep=.8pt, outer sep=0pt] (C) at (2,0){};
	\node[circle, fill, inner sep=.8pt, outer sep=0pt] (D) at (1,.57735){};
	\draw (A)--(B)--(D)--(A)--(C)--(D)--(B)--(C);
	\draw[line width=2pt, red, opacity=.7] (A)--(B);
	\fill[red, opacity=.7] (A) circle (3pt);
	\fill[red, opacity=.7] (B) circle (3pt);
\end{tikzpicture}$$

$$\begin{tikzpicture}[thick] 
	\node[circle, fill, inner sep=.8pt, outer sep=0pt] (A) at (0,0){};
	\node[circle, fill, inner sep=.8pt, outer sep=0pt] (B) at (1,1.73205){};
	\node[circle, fill, inner sep=.8pt, outer sep=0pt] (C) at (2,0){};
	\node[circle, fill, inner sep=.8pt, outer sep=0pt] (D) at (3,1.73205){};
	\draw (B)--(D)--(C)--(A)--(B)--(C);
	\draw [<->] (3,.866)--(3.5,.866);
\end{tikzpicture}
\begin{tikzpicture}[thick] 
	\node[circle, fill, inner sep=.8pt, outer sep=0pt] (A) at (0,0){};
	\node[circle, fill, inner sep=.8pt, outer sep=0pt] (B) at (1,1.73205){};
	\node[circle, fill, inner sep=.8pt, outer sep=0pt] (C) at (2,0){};
	\node[circle, fill, inner sep=.8pt, outer sep=0pt] (D) at (3,1.73205){};
	\node[circle, fill, inner sep=.8pt, outer sep=0pt] (E) at (1.5,.866){};
	\draw (B)--(D)--(C)--(A)--(B)--(C)--(D)--(A);
\end{tikzpicture}$$

$$\begin{tikzpicture}[thick] 
	\node[circle, fill, inner sep=.8pt, outer sep=0pt] (A) at (0,0){};
	\node[circle, fill, inner sep=.8pt, outer sep=0pt] (B) at (1,1.73205){};
	\node[circle, fill, inner sep=.8pt, outer sep=0pt] (C) at (2,0){};
	\node[circle, fill, inner sep=.8pt, outer sep=0pt] (D) at (3,1.73205){};
	\draw (B)--(D)--(C)--(A)--(B)--(C);
	\fill[red, opacity=.7] (A) circle (3pt);
		\draw [<->] (3,.866)--(3.5,.866);
\end{tikzpicture}
\begin{tikzpicture}[thick] 
	\node[circle, fill, inner sep=.8pt, outer sep=0pt] (A) at (0,0){};
	\node[circle, fill, inner sep=.8pt, outer sep=0pt] (B) at (1,1.73205){};
	\node[circle, fill, inner sep=.8pt, outer sep=0pt] (C) at (2,0){};
	\node[circle, fill, inner sep=.8pt, outer sep=0pt] (D) at (3,1.73205){};
	\node[circle, fill, inner sep=.8pt, outer sep=0pt] (E) at (1.5,.866){};
	\draw (B)--(D)--(C)--(A)--(B)--(C)--(D)--(A);
	\fill[red, opacity=.7] (A) circle (3pt);
\end{tikzpicture}$$

$$\begin{tikzpicture}[thick] 
	\node[circle, fill, inner sep=.8pt, outer sep=0pt] (A) at (0,0){};
	\node[circle, fill, inner sep=.8pt, outer sep=0pt] (B) at (1,1.73205){};
	\node[circle, fill, inner sep=.8pt, outer sep=0pt] (C) at (2,0){};	
	\node[circle, fill, inner sep=.8pt, outer sep=0pt] (D) at (3,1.73205){};
	\draw (B)--(D)--(C)--(A)--(B)--(C);
	\draw [<->] (3,.866)--(3.5,.866);
	\fill[red, opacity=.7] (C) circle (3pt);
\end{tikzpicture}
\begin{tikzpicture}[thick] 
	\node[circle, fill, inner sep=.8pt, outer sep=0pt] (A) at (0,0){};
	\node[circle, fill, inner sep=.8pt, outer sep=0pt] (B) at (1,1.73205){};
	\node[circle, fill, inner sep=.8pt, outer sep=0pt] (C) at (2,0){};
	\node[circle, fill, inner sep=.8pt, outer sep=0pt] (D) at (3,1.73205){};
	\node[circle, fill, inner sep=.8pt, outer sep=0pt] (E) at (1.5,.866){};
	\draw (B)--(D)--(C)--(A)--(B)--(C)--(D)--(A);
	\fill[red, opacity=.7] (C) circle (3pt);
\end{tikzpicture}$$

$$\begin{tikzpicture}[thick] 
	\node[circle, fill, inner sep=.8pt, outer sep=0pt] (A) at (0,0){};
	\node[circle, fill, inner sep=.8pt, outer sep=0pt] (B) at (1,1.73205){};
	\node[circle, fill, inner sep=.8pt, outer sep=0pt] (C) at (2,0){};
	\node[circle, fill, inner sep=.8pt, outer sep=0pt] (D) at (3,1.73205){};
	\draw (B)--(D)--(C)--(A)--(B)--(C);
	\draw [<->] (3,.866)--(3.5,.866);
	\fill[red, opacity=.7] (A) circle (3pt);
	\fill[red, opacity=.7] (B) circle (3pt);
	\draw[line width=2pt, red, opacity=.7] (A)--(B);
\end{tikzpicture}
\begin{tikzpicture}[thick] 
	\node[circle, fill, inner sep=.8pt, outer sep=0pt] (A) at (0,0){};
	\node[circle, fill, inner sep=.8pt, outer sep=0pt] (B) at (1,1.73205){};
	\node[circle, fill, inner sep=.8pt, outer sep=0pt] (C) at (2,0){};
	\node[circle, fill, inner sep=.8pt, outer sep=0pt] (D) at (3,1.73205){};
	\node[circle, fill, inner sep=.8pt, outer sep=0pt] (E) at (1.5,.866){};
	\draw (B)--(D)--(C)--(A)--(B)--(C)--(D)--(A);
	\fill[red, opacity=.7] (A) circle (3pt);
	\fill[red, opacity=.7] (B) circle (3pt);
	\draw[line width=2pt, red, opacity=.7] (A)--(B);
\end{tikzpicture}$$

$$\begin{tikzpicture}[thick] 
	\node[circle, fill, inner sep=.8pt, outer sep=0pt] (A) at (0,0){};
	\node[circle, fill, inner sep=.8pt, outer sep=0pt] (B) at (1,1.73205){};
	\node[circle, fill, inner sep=.8pt, outer sep=0pt] (C) at (2,0){};
	\node[circle, fill, inner sep=.8pt, outer sep=0pt] (D) at (3,1.73205){};
	\draw (B)--(D)--(C)--(A)--(B)--(C);
	\draw [<->] (3,.866)--(3.5,.866);
	\fill[red, opacity=.7] (A) circle (3pt);
	\fill[red, opacity=.7] (D) circle (3pt);
\end{tikzpicture}
\begin{tikzpicture}[thick] 
	\node[circle, fill, inner sep=.8pt, outer sep=0pt] (A) at (0,0){};
	\node[circle, fill, inner sep=.8pt, outer sep=0pt] (B) at (1,1.73205){};
	\node[circle, fill, inner sep=.8pt, outer sep=0pt] (C) at (2,0){};
	\node[circle, fill, inner sep=.8pt, outer sep=0pt] (D) at (3,1.73205){};
	\node[circle, fill, inner sep=.8pt, outer sep=0pt] (E) at (1.5,.866){};
	\draw (B)--(D)--(C)--(A)--(B)--(C)--(D)--(A);
	\fill[red, opacity=.7] (A) circle (3pt);
	\fill[red, opacity=.7] (D) circle (3pt);
\end{tikzpicture}$$

$$\begin{tikzpicture}[thick] 
	\node[circle, fill, inner sep=.8pt, outer sep=0pt] (A) at (0,0){};
	\node[circle, fill, inner sep=.8pt, outer sep=0pt] (B) at (1,1.73205){};
	\node[circle, fill, inner sep=.8pt, outer sep=0pt] (C) at (2,0){};
	\node[circle, fill, inner sep=.8pt, outer sep=0pt] (D) at (3,1.73205){};
	\draw (B)--(D)--(C)--(A)--(B)--(C);
	\draw [<->] (3,.866)--(3.5,.866);
	\fill[red, opacity=.7] (A) circle (3pt);
	\fill[red, opacity=.7] (B) circle (3pt);
	\fill[red, opacity=.7] (D) circle (3pt);
	\draw[line width=2pt, red, opacity=.7] (A)--(B)--(D);
\end{tikzpicture}
\begin{tikzpicture}[thick] 
	\node[circle, fill, inner sep=.8pt, outer sep=0pt] (A) at (0,0){};
	\node[circle, fill, inner sep=.8pt, outer sep=0pt] (B) at (1,1.73205){};
	\node[circle, fill, inner sep=.8pt, outer sep=0pt] (C) at (2,0){};
	\node[circle, fill, inner sep=.8pt, outer sep=0pt] (D) at (3,1.73205){};
	\node[circle, fill, inner sep=.8pt, outer sep=0pt] (E) at (1.5,.866){};
	\draw (B)--(D)--(C)--(A)--(B)--(C)--(D)--(A);
	\fill[red, opacity=.7] (A) circle (3pt);
	\fill[red, opacity=.7] (B) circle (3pt);
	\fill[red, opacity=.7] (D) circle (3pt);
	\draw[line width=2pt, red, opacity=.7] (A)--(B)--(D);
\end{tikzpicture}$$

$$\begin{tikzpicture}[thick] 
	\node[circle, fill, inner sep=.8pt, outer sep=0pt] (A) at (0,0){};
	\node[circle, fill, inner sep=.8pt, outer sep=0pt] (B) at (1,1.73205){};
	\node[circle, fill, inner sep=.8pt, outer sep=0pt] (C) at (2,0){};
	\node[circle, fill, inner sep=.8pt, outer sep=0pt] (D) at (3,1.73205){};
	\draw (B)--(D)--(C)--(A)--(B)--(C);
	\draw [<->] (3,.866)--(3.5,.866);
	\fill[red, opacity=.7] (C) circle (3pt);
	\fill[red, opacity=.7] (B) circle (3pt);
	\draw[line width=2pt, red, opacity=.7] (C)--(B);
\end{tikzpicture}
\begin{tikzpicture}[thick] 
	\node[circle, fill, inner sep=.8pt, outer sep=0pt] (A) at (0,0){};
	\node[circle, fill, inner sep=.8pt, outer sep=0pt] (B) at (1,1.73205){};
	\node[circle, fill, inner sep=.8pt, outer sep=0pt] (C) at (2,0){};
	\node[circle, fill, inner sep=.8pt, outer sep=0pt] (D) at (3,1.73205){};
	\node[circle, fill, inner sep=.8pt, outer sep=0pt] (E) at (1.5,.866){};
	\draw (B)--(D)--(C)--(A)--(B)--(C)--(D)--(A);
	\fill[red, opacity=.7] (C) circle (3pt);
	\fill[red, opacity=.7] (B) circle (3pt);
	\fill[red, opacity=.7] (E) circle (3pt);		
	\draw[line width=2pt, red, opacity=.7] (C)--(B);
\end{tikzpicture}$$

Even though, unlike the case in higher dimensions, there are only finitely many combinatorial types of Alexander moves, the number of edges involved in the state following subdivision of an edge makes using Alexander moves in state-sum constructions unwieldy.  We therefore wish to reduce the problem of PL homeomorphism of curve-surface pairs to Pachner moves, or rather the extended Pachner (bistellar) moves
of \cite{CY}.

It is again trivial by exhaustive consideration of cases to identify the flag-like extended Pachner moves.  There are seven distinct inverse pairs, six of flag-like Pachner moves, and one flag-like extended Pachner move.  We depict them below:

$$\begin{tikzpicture}[thick] 
	\node[circle, fill, inner sep=.8pt, outer sep=0pt] (A) at (0,0){};
	\node[circle, fill, inner sep=.8pt, outer sep=0pt] (B) at (1,1.73205){};
	\node[circle, fill, inner sep=.8pt, outer sep=0pt] (C) at (2,0){};
	\draw (A)--(B)--(C)--(A);
	\draw [<->] (2,.866)--(2.5,.866);
\end{tikzpicture}
\begin{tikzpicture}[thick] 
	\node[circle, fill, inner sep=.8pt, outer sep=0pt] (A) at (0,0){};
	\node[circle, fill, inner sep=.8pt, outer sep=0pt] (B) at (1,1.73205){};
	\node[circle, fill, inner sep=.8pt, outer sep=0pt] (C) at (2,0){};
	\node[circle, fill, inner sep=.8pt, outer sep=0pt] (D) at (1,.57735){};
	\draw (A)--(B)--(D)--(A)--(C)--(D)--(B)--(C);
\end{tikzpicture}$$

$$\begin{tikzpicture}[thick] 
	\node[circle, fill, inner sep=.8pt, outer sep=0pt] (A) at (0,0){};
	\node[circle, fill, inner sep=.8pt, outer sep=0pt] (B) at (1,1.73205){};
	\node[circle, fill, inner sep=.8pt, outer sep=0pt] (C) at (2,0){};
	\draw (A)--(B)--(C)--(A);
	\fill[red, opacity=.7] (A) circle (3pt);
	\draw [<->] (2,.866)--(2.5,.866);
\end{tikzpicture}
\begin{tikzpicture}[thick] 
	\node[circle, fill, inner sep=.8pt, outer sep=0pt] (A) at (0,0){};
	\node[circle, fill, inner sep=.8pt, outer sep=0pt] (B) at (1,1.73205){};
	\node[circle, fill, inner sep=.8pt, outer sep=0pt] (C) at (2,0){};
	\node[circle, fill, inner sep=.8pt, outer sep=0pt] (D) at (1,.57735){};
	\draw (A)--(B)--(D)--(A)--(C)--(D)--(B)--(C);
	\fill[red, opacity=.7] (A) circle (3pt);
\end{tikzpicture}$$

$$\begin{tikzpicture}[thick] 
	\node[circle, fill, inner sep=.8pt, outer sep=0pt] (A) at (0,0){};
	\node[circle, fill, inner sep=.8pt, outer sep=0pt] (B) at (1,1.73205){};
	\node[circle, fill, inner sep=.8pt, outer sep=0pt] (C) at (2,0){};
	\draw (A)--(B)--(C)--(A);
	\draw[line width=2pt, red, opacity=.7] (A)--(B);
	\fill[red, opacity=.7] (A) circle (3pt);
	\fill[red, opacity=.7] (B) circle (3pt);
	\draw [<->] (2,.866)--(2.5,.866);
\end{tikzpicture}
\begin{tikzpicture}[thick] 
	\node[circle, fill, inner sep=.8pt, outer sep=0pt] (A) at (0,0){};
	\node[circle, fill, inner sep=.8pt, outer sep=0pt] (B) at (1,1.73205){};
	\node[circle, fill, inner sep=.8pt, outer sep=0pt] (C) at (2,0){};
	\node[circle, fill, inner sep=.8pt, outer sep=0pt] (D) at (1,.57735){};
	\draw (A)--(B)--(D)--(A)--(C)--(D)--(B)--(C);
	\draw[line width=2pt, red, opacity=.7] (A)--(B);
	\fill[red, opacity=.7] (A) circle (3pt);
	\fill[red, opacity=.7] (B) circle (3pt);
\end{tikzpicture}$$

$$\begin{tikzpicture}[thick] 
	\node[circle, fill, inner sep=.8pt, outer sep=0pt] (A) at (0,0){};
	\node[circle, fill, inner sep=.8pt, outer sep=0pt] (B) at (1,1.73205){};
	\node[circle, fill, inner sep=.8pt, outer sep=0pt] (C) at (2,0){};
	\node[circle, fill, inner sep=.8pt, outer sep=0pt] (D) at (3,1.73205){};
	\draw (B)--(D)--(C)--(A)--(B)--(C);
	\draw [<->] (3,.866)--(3.5,.866);
\end{tikzpicture}
\begin{tikzpicture}[thick] 
	\node[circle, fill, inner sep=.8pt, outer sep=0pt] (A) at (0,0){};
	\node[circle, fill, inner sep=.8pt, outer sep=0pt] (B) at (1,1.73205){};
	\node[circle, fill, inner sep=.8pt, outer sep=0pt] (C) at (2,0){};
	\node[circle, fill, inner sep=.8pt, outer sep=0pt] (D) at (3,1.73205){};
	\draw (A)--(B)--(D)--(C)--(A)--(D);
\end{tikzpicture}$$

$$\begin{tikzpicture}[thick] 
	\node[circle, fill, inner sep=.8pt, outer sep=0pt] (A) at (0,0){};
	\node[circle, fill, inner sep=.8pt, outer sep=0pt] (B) at (1,1.73205){};
	\node[circle, fill, inner sep=.8pt, outer sep=0pt] (C) at (2,0){};
	\node[circle, fill, inner sep=.8pt, outer sep=0pt] (D) at (3,1.73205){};
	\draw (B)--(D)--(C)--(A)--(B)--(C);
	\fill[red, opacity=.7] (A) circle (3pt);
		\draw [<->] (3,.866)--(3.5,.866);
\end{tikzpicture}
\begin{tikzpicture}[thick] 
	\node[circle, fill, inner sep=.8pt, outer sep=0pt] (A) at (0,0){};
	\node[circle, fill, inner sep=.8pt, outer sep=0pt] (B) at (1,1.73205){};
	\node[circle, fill, inner sep=.8pt, outer sep=0pt] (C) at (2,0){};
	\node[circle, fill, inner sep=.8pt, outer sep=0pt] (D) at (3,1.73205){};
	\draw (A)--(B)--(D)--(C)--(A)--(D);
	\fill[red, opacity=.7] (A) circle (3pt);
\end{tikzpicture}$$

$$\begin{tikzpicture}[thick] 
	\node[circle, fill, inner sep=.8pt, outer sep=0pt] (A) at (0,0){};
	\node[circle, fill, inner sep=.8pt, outer sep=0pt] (B) at (1,1.73205){};
	\node[circle, fill, inner sep=.8pt, outer sep=0pt] (C) at (2,0){};
	\node[circle, fill, inner sep=.8pt, outer sep=0pt] (D) at (3,1.73205){};
	\draw (B)--(D)--(C)--(A)--(B)--(C);
	\draw [<->] (3,.866)--(3.5,.866);
	\fill[red, opacity=.7] (A) circle (3pt);
	\fill[red, opacity=.7] (B) circle (3pt);
	\draw[line width=2pt, red, opacity=.7] (A)--(B);
\end{tikzpicture}
\begin{tikzpicture}[thick] 
	\node[circle, fill, inner sep=.8pt, outer sep=0pt] (A) at (0,0){};
	\node[circle, fill, inner sep=.8pt, outer sep=0pt] (B) at (1,1.73205){};
	\node[circle, fill, inner sep=.8pt, outer sep=0pt] (C) at (2,0){};
	\node[circle, fill, inner sep=.8pt, outer sep=0pt] (D) at (3,1.73205){};
	\draw (A)--(B)--(D)--(C)--(A)--(D);
	\fill[red, opacity=.7] (A) circle (3pt);
	\fill[red, opacity=.7] (B) circle (3pt);
	\draw[line width=2pt, red, opacity=.7] (A)--(B);
\end{tikzpicture}$$

$$\begin{tikzpicture}[thick] 
	\node[circle, fill, inner sep=.8pt, outer sep=0pt] (A) at (0,0){};
	\node[circle, fill, inner sep=.8pt, outer sep=0pt] (B) at (1,1.73205){};
	\node[circle, fill, inner sep=.8pt, outer sep=0pt] (C) at (2,0){};
	\node[circle, fill, inner sep=.8pt, outer sep=0pt] (D) at (3,1.73205){};
	\draw (B)--(D)--(C)--(A)--(B)--(C);
	\draw [<->] (3,.866)--(3.5,.866);
	\fill[red, opacity=.7] (C) circle (3pt);
	\fill[red, opacity=.7] (B) circle (3pt);
	\draw[line width=2pt, red, opacity=.7] (C)--(B);
\end{tikzpicture}
\begin{tikzpicture}[thick] 
	\node[circle, fill, inner sep=.8pt, outer sep=0pt] (A) at (0,0){};
	\node[circle, fill, inner sep=.8pt, outer sep=0pt] (B) at (1,1.73205){};
	\node[circle, fill, inner sep=.8pt, outer sep=0pt] (C) at (2,0){};
	\node[circle, fill, inner sep=.8pt, outer sep=0pt] (D) at (3,1.73205){};
	\node[circle, fill, inner sep=.8pt, outer sep=0pt] (E) at (1.5,.866){};
	\draw (B)--(D)--(C)--(A)--(B)--(C)--(D)--(A);
	\fill[red, opacity=.7] (C) circle (3pt);
	\fill[red, opacity=.7] (B) circle (3pt);
	\fill[red, opacity=.7] (E) circle (3pt);		
	\draw[line width=2pt, red, opacity=.7] (C)--(B);
\end{tikzpicture}$$

The reduction of PL homeomorphism of curve-surface pairs to flag-like extended Pachner moves then follows from Theorem \ref{Alexander_moves_suffice}  and the following:

\begin{proposition} 
\label{extended_Pachner_moves_suffice} Every flag-like Alexander subdivision of a triangulation of a curve-surface pair can be accomplished by a sequence of flag-like extended Pachner moves.
\end{proposition}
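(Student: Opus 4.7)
The plan is to treat the ten flag-like Alexander subdivisions case by case, grouping them as the three $1$-to-$3$ subdivisions (inserting an interior vertex in a $2$-simplex) and the seven $2$-to-$4$ subdivisions (inserting a vertex into an edge shared by two $2$-simplexes).

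The first group is immediate: each of the three flag-like $1$-to-$3$ Alexander moves is literally one of the three flag-like $1$-to-$3$ Pachner moves listed above, classified by whether the subdivided triangle meets the defect in $\emptyset$, in a single vertex, or in an edge. No decomposition is required.

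For the second group I would use the standard trick that realises an edge subdivision as a $1$-to-$3$ followed by a $2$-to-$2$ flip. Explicitly, given triangles $ABC$ and $BCD$ sharing an edge $BC$ to be subdivided, first perform a flag-like $1$-to-$3$ on $ABC$ to insert an interior vertex $P$, yielding $ABP, BCP, CAP$; then apply the flag-like $2$-to-$2$ Pachner flip to the shared edge $BC$ of $BCP$ and $BCD$, replacing it with $PD$. The result is the four triangles $ABP, BPD, CPD, CAP$, which is combinatorially identical to the target of the Alexander edge-subdivision with $P$ in the role of the inserted midpoint. The bulk of the proof is then a mechanical check, one case per Alexander move, that both the $1$-to-$3$ and $2$-to-$2$ steps can be chosen from among the listed flag-like Pachner moves once the defect content at $A$, $D$, and the endpoints $B,C$ is accounted for; in each case, one of the three flag-like Pachner $1$-to-$3$ moves fits the defect content of $ABC$, and one of the three ordinary flag-like Pachner $2$-to-$2$ moves fits the defect content that remains on the quadrilateral $ABDC$ after that step.

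The sole genuine obstacle -- and the reason an \emph{extended} Pachner move is needed at all -- is the single flag-like $2$-to-$4$ Alexander move in which the subdivided edge $BC$ itself lies on the defect. Here the proposed $2$-to-$2$ flip to $PD$ would exchange a defect edge for a non-defect edge and so fail to be flag-like, and the decomposition above does not apply. Instead, this Alexander move is realised directly, in a single step, by the seventh flag-like (extended) Pachner move depicted above, whose source and target coincide with those of the Alexander subdivision of a defect edge.
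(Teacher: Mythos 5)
Your proof is correct, and its core construction --- realizing an Alexander edge subdivision as a flag-like $1$--$3$ move on one of the two incident triangles followed by a flag-like $2$--$2$ flip across the subdivided edge --- is exactly the sequence the paper depicts for the two genuinely delicate cases, namely the edge subdivisions in which the curve meets the supporting quadrilateral in more than a single simplex (the pair of opposite curve-vertices, and the path of two curve-edges along the boundary). Where you differ is in the four remaining edge-subdivision cases: there the paper does not decompose explicitly, but observes that the quadrilateral meets the curve in a single simplex lying in its boundary, so that \emph{every} Pachner move supported inside that cell is automatically flag-like, and then simply invokes Pachner's theorem for the cell. Your uniform treatment buys a self-contained, fully explicit argument at the cost of the short case check you correctly flag (and which does go through: in each of the six non-defect-edge cases the intermediate state after the $1$--$3$ step is flag-like, and the subsequent flip is one of the three listed flag-like $2$--$2$ moves); the paper's treatment of those four cases is shorter but leans on the general form of Pachner's result. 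Both arguments agree on the trivial cases: the three $1$--$3$ subdivisions are themselves flag-like Pachner moves, and the subdivision of a defect edge is itself the single extended move. One small slip: the quadrilateral on which your $2$--$2$ flip acts is $PBDC$, not $ABDC$.
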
 

\begin{proof} The first three flag-like Alexander moves are flag-like Pachner moves, so for these the result is trivial.
The next four pairs of flag-like Alexander moves change the triangulation of a cell which intersects the curve in a single simplex, lying in the boundary of the cell.  It is clear in this circumstance that any Pachner move taking place inside the cell must be flag-like, so for these cases the result follows from Pachner's proof \cite{P}.
The tenth flag-like Alexander move is an extended Pachner move, so for this the result is again trivial.

Sequences of flag-like Pachner moves implementing the eighth and ninth flag-like Alexander moves are depicted below:

$$\begin{tikzpicture}[thick] 
	\node[circle, fill, inner sep=.8pt, outer sep=0pt] (A) at (0,0){};
	\node[circle, fill, inner sep=.8pt, outer sep=0pt] (B) at (1,1.73205){};
	\node[circle, fill, inner sep=.8pt, outer sep=0pt] (C) at (2,0){};
	\node[circle, fill, inner sep=.8pt, outer sep=0pt] (D) at (3,1.73205){};
	\draw (B)--(D)--(C)--(A)--(B)--(C);
	\draw [->] (3,.866)--(3.5,.866);
	\fill[red, opacity=.7] (A) circle (3pt);
	\fill[red, opacity=.7] (D) circle (3pt);
\end{tikzpicture}
\begin{tikzpicture}[thick] 
	\node[circle, fill, inner sep=.8pt, outer sep=0pt] (A) at (0,0){};
	\node[circle, fill, inner sep=.8pt, outer sep=0pt] (B) at (1,1.73205){};
	\node[circle, fill, inner sep=.8pt, outer sep=0pt] (C) at (2,0){};
	\node[circle, fill, inner sep=.8pt, outer sep=0pt] (D) at (3,1.73205){};
	\node[circle, fill, inner sep=.8pt, outer sep=0pt] (Z) at (1,.57735){};
	\draw (B)--(D)--(C)--(A)--(B)--(Z)--(A);
	\draw (B)--(C)--(Z);
	\draw [->] (3,.866)--(3.5,.866);
	\fill[red, opacity=.7] (A) circle (3pt);
	\fill[red, opacity=.7] (D) circle (3pt);
\end{tikzpicture}
\begin{tikzpicture}[thick] 
	\node[circle, fill, inner sep=.8pt, outer sep=0pt] (A) at (0,0){};
	\node[circle, fill, inner sep=.8pt, outer sep=0pt] (B) at (1,1.73205){};
	\node[circle, fill, inner sep=.8pt, outer sep=0pt] (C) at (2,0){};
	\node[circle, fill, inner sep=.8pt, outer sep=0pt] (D) at (3,1.73205){};
	\node[circle, fill, inner sep=.8pt, outer sep=0pt] (Z) at (1,.57735){};
	\draw (B)--(D)--(C)--(A)--(B)--(Z)--(A);
	\draw (C)--(Z)--(D);
	\fill[red, opacity=.7] (A) circle (3pt);
	\fill[red, opacity=.7] (D) circle (3pt);
\end{tikzpicture}$$

$$\begin{tikzpicture}[thick] 
	\node[circle, fill, inner sep=.8pt, outer sep=0pt] (A) at (0,0){};
	\node[circle, fill, inner sep=.8pt, outer sep=0pt] (B) at (1,1.73205){};
	\node[circle, fill, inner sep=.8pt, outer sep=0pt] (C) at (2,0){};
	\node[circle, fill, inner sep=.8pt, outer sep=0pt] (D) at (3,1.73205){};
	\draw (B)--(D)--(C)--(A)--(B)--(C);
	\draw [->] (3,.866)--(3.5,.866);
	\fill[red, opacity=.7] (A) circle (3pt);
	\fill[red, opacity=.7] (B) circle (3pt);
	\fill[red, opacity=.7] (D) circle (3pt);
	\draw[line width=2pt, red, opacity=.7] (A)--(B)--(D);
\end{tikzpicture}
\begin{tikzpicture}[thick] 
	\node[circle, fill, inner sep=.8pt, outer sep=0pt] (A) at (0,0){};
	\node[circle, fill, inner sep=.8pt, outer sep=0pt] (B) at (1,1.73205){};
	\node[circle, fill, inner sep=.8pt, outer sep=0pt] (C) at (2,0){};
	\node[circle, fill, inner sep=.8pt, outer sep=0pt] (D) at (3,1.73205){};
	\node[circle, fill, inner sep=.8pt, outer sep=0pt] (Z) at (1,.57735){};
	\draw (B)--(D)--(C)--(A)--(B)--(Z)--(A);
	\draw (B)--(C)--(Z);
	\draw [->] (3,.866)--(3.5,.866);
	\fill[red, opacity=.7] (A) circle (3pt);
	\fill[red, opacity=.7] (B) circle (3pt);
	\fill[red, opacity=.7] (D) circle (3pt);
	\draw[line width=2pt, red, opacity=.7] (A)--(B)--(D);
\end{tikzpicture}
\begin{tikzpicture}[thick] 
	\node[circle, fill, inner sep=.8pt, outer sep=0pt] (A) at (0,0){};
	\node[circle, fill, inner sep=.8pt, outer sep=0pt] (B) at (1,1.73205){};
	\node[circle, fill, inner sep=.8pt, outer sep=0pt] (C) at (2,0){};
	\node[circle, fill, inner sep=.8pt, outer sep=0pt] (D) at (3,1.73205){};
	\node[circle, fill, inner sep=.8pt, outer sep=0pt] (Z) at (1,.57735){};
	\draw (B)--(D)--(C)--(A)--(B)--(Z)--(A);
	\draw (C)--(Z)--(D);
	\fill[red, opacity=.7] (A) circle (3pt);
	\fill[red, opacity=.7] (B) circle (3pt);
	\fill[red, opacity=.7] (D) circle (3pt);
	\draw[line width=2pt, red, opacity=.7] (A)--(B)--(D);
\end{tikzpicture}$$

\end{proof}

\section{Colorings of triangulated surface-curve pairs}
\subsection{Edge labels and orientations}
Each edge of a 2-simplex either does not intersect the curve, intersects the curve at only one point, or is totally contained in the curve.  The restriction to flag-like triangulations ensures that the fourth possibility, intersecting the curve in exactly two points (the end points), does not arise.   As in the combinatorial constructions of defect-free finite gauge group Dijkgraaf-Witten theory (cf. \cite{W}),  edges not intersecting the curve are colored with elements of a finite group $G$.  Edges intersecting the curve at one point are colored with elements of a set $X$, and edges lying on the curve are colored with elements of a finite group $H$:
$$\begin{tikzpicture}[thick]
		\node[circle, fill, inner sep=.8pt, outer sep=0pt] (A) at (0,0){};
		\node[circle, fill, inner sep=.8pt, outer sep=0pt] (B) at (2,0){};
		\draw (A)--(B) node[font=\normalsize, midway, below] {$g$};
\end{tikzpicture}$$
$$\begin{tikzpicture}[thick]
		\node[circle, fill, inner sep=.8pt, outer sep=0pt] (A) at (0,0){};
		\node[circle, fill, inner sep=.8pt, outer sep=0pt] (B) at (2,0){};
		\draw (A)--(B) node[font=\normalsize, midway, below] {$x$};
		\fill [red, opacity=.7] (A) circle (3pt);
\end{tikzpicture}$$
$$\begin{tikzpicture}[thick]
		\node[circle, fill, inner sep=.8pt, outer sep=0pt] (A) at (0,0){};
		\node[circle, fill, inner sep=.8pt, outer sep=0pt] (B) at (2,0){};
		\draw (A)--(B) node[font=\normalsize, midway, below] {$\eta$};
		\draw[line width=2pt, red, opacity=.7] (A) -- (B);
		\fill [red, opacity=.7] (A) circle (3pt);
		\fill [red, opacity=.7] (B) circle (3pt);
\end{tikzpicture}$$
where $g\in G$, $x\in X$, and $\eta\in H$.

The set $X$ will, moreover, be equipped with a right action of the group $G$ and a left action of the group $H$.

As in \cite{Y}, edge orientations are determined by enumerating the vertices of the triangulation and orienting the edges from the earlier end point to the later.  The vertices lying on the curve are always enumerated first.  Then, the remaining vertices are enumerated.  

It follows from this ordering that edges with only one vertex on the curve will always be oriented away from the vertex intersecting the curve:
$$\begin{tikzpicture}[thick]
		\node[circle, fill, inner sep=.8pt, outer sep=0pt] (A) at (0,0){};
		\node[circle, fill, inner sep=.8pt, outer sep=0pt] (B) at (2,0){};
		\draw [midarrow={>}] (A)--(B);
		\fill [red, opacity=.7] (A) circle (3pt);
\end{tikzpicture}$$

It also follows that every 2-simplex will have two edges oriented in the same direction, with the third edge oriented oppositely.  Additionally, the orientation of those two edges give each 2-simplex its own orientation, which may either agree with or be opposite the given orientation of the surface.  By convention, we will always depict portions of triangulations as viewed from the side of the surface from which counterclockwise corresponds to the given orientation of the surface.

\subsection{Admissible colorings of  2-simplexes}
Consider the three possible flag-like 2-simplexes.
$$\setlength{\tabcolsep}{1.5em}{
\begin{tabular}{c c c}
	\begin{tikzpicture}[thick] 
		\node[circle, label=left:{\small0}, fill, inner sep=.8pt, outer sep=0pt] (A) at (0,0){};
		\node[circle, label=above:{\small1}, fill, inner sep=.8pt, outer sep=0pt] (B) at (1,1.73205){};
		\node[circle, label=right:{\small2}, fill, inner sep=.8pt, outer sep=0pt] (C) at (2,0){};
		\draw [midarrow={>}] (A)--(B) node[midway, left]{$f$};
		\draw [midarrow={>}] (B)--(C) node[midway, right]{$g$};
		\draw [midarrow={>}] (A)--(C) node[midway, below]{$h$};	
		\node at (1,.7){$\circlearrowright$};
	\end{tikzpicture}
	&
	\begin{tikzpicture}[thick]
		\node[circle, label=left:{\small0}, fill, inner sep=.8pt, outer sep=0pt] (A) at (0,0){};
		\node[circle, label=above:{\small1}, fill, inner sep=.8pt, outer sep=0pt] (B) at (1,1.73205){};
		\node[circle, label=right:{\small2}, fill, inner sep=.8pt, outer sep=0pt] (C) at (2,0){};
		\draw [midarrow={>}] (A)--(B) node[midway, left]{$x$};
		\draw [midarrow={>}] (B)--(C) node[midway, right]{$g$};
		\draw [midarrow={>}] (A)--(C) node[midway, below]{$y$};
		\fill [red, opacity=.7] (A) circle (3pt);
		\node at (1,.7){$\circlearrowright$};
	\end{tikzpicture}
	&
	\begin{tikzpicture}[thick]
		\node[circle, label=left:{\small0}, fill, inner sep=.8pt, outer sep=0pt] (A) at (0,0){};
		\node[circle, label=above:{\small1}, fill, inner sep=.8pt, outer sep=0pt] (B) at (1,1.73205){};
		\node[circle, label=right:{\small2}, fill, inner sep=.8pt, outer sep=0pt] (C) at (2,0){};
		\draw [midarrow={>}] (A)--(B) node[midway, left]{$\eta$};
		\draw [midarrow={>}] (B)--(C) node[midway, right]{$x$};
		\draw [midarrow={>}] (A)--(C) node[midway, below]{$y$};
		\draw[line width=2pt, red, opacity=.7] (A)--(B);
		\fill [red, opacity=.7] (A) circle (3pt);
		\fill [red, opacity=.7] (B) circle (3pt);
		\node at (1,.7){$\circlearrowright$};
	\end{tikzpicture}\\
	$f, g, h\in G$ & $g\in G$, $x, y\in X$ & $\eta\in H$, $x, y\in X$
\end{tabular}}$$

In these figures and those following, the numbering of the vertices is not intended to indicate their absolute numbering in the enumeration of the vertices, but their relative positions in that enumeration.  In each figure vertices will be numbered with an initial segment of the natural numbers beginning with zero.

The notion of admissiblity will be the most natural generalization of that used in the combinatorial construction of finite-gauge group Dijkgraaf-Witten theory:  for simplexes not intersecting the curve, as expected, the 2-simplex is admissibly labeled if the product of the group elements labeling the edges oriented in the same direction is the group element labeling the third edge, i.e. in the notation of the figures above, if $fg=h$.
A 2-simplex with only one vertex intersecting the curve is admissibly labeled if $x\cdot g=y$, where $\cdot: X\times G\to X$ denotes the right action of $G$ on $X$.  Similarly, a 2-simplex with an edge entirely on the curve is admissibly labeled if $\eta\cdot x=y$ for the left group action $\cdot: H\times X\to X$.

\subsection{Admissible colorings of triangulations and extended Pachner moves}

\begin{definition} Let $G$ and $H$ be two finite groups, and $X$ as set with a right $G$-action and a left $H$-action which commute (in the natural sense which looks like a restricted associativity condtion: $h\cdot(x\cdot g) = (h\cdot x)\cdot g$). An {\em$(H,X,G)$-coloring} of a triangulation $\mathcal T$ of a surface-curve pair $(\Sigma \supset C)$ is a function $\lambda:{\mathcal T}_1\rightarrow G \coprod X \coprod H$ such that the value of every edge not intersecting $C$ lies in $G$, the value of every edge intersecting $C$ in a single point lies in $X$, and the value of every edge of $C$ lies in $H$.  Here ${\mathcal T}_k$ denotes the set of $k$-simplexes in the triangulation $\mathcal T$.  

An $(H,X,G)$-coloring is {\em admissible} if every 2-simplex is admissible as described above.
\end{definition}  

To determine both what is the appropriate generalization of a 2-cocycle on the gauge group and what sort of normalization procedure will give a topological invariant of surface-curve pairs, it is necessary to understand the behavior of admissible colorings under flag-like extended Pachner moves.

For each of the three types of triangles, a 1-3 subdivision move will introduce a new vertex and three new edges, with the edge between the new vertex and the last numbered vertex of the unsubdivided triangle disjoint from the curve.  An admissible coloring of the original triangle, together with the choice of an element of $G$ with which to color this edge, then uniquely determines admissible colorings of the three smaller triangles of the subdivision

$$\begin{tikzpicture}[thick]
	\node[circle, label=left:{\small0}, fill, inner sep=.8pt, outer sep=0pt] (A) at (0,0){};
	\node[circle, label=above:{\small1}, fill, inner sep=.8pt, outer sep=0pt] (B) at (1.5,2.598075){};
	\node[circle, label=right:{\small2}, fill, inner sep=.8pt, outer sep=0pt] (C) at (3,0){};
	\begin{scope}
		\draw [midarrow={>}] (A)--(B) node[font=\small, midway,left]{$f$};
		\draw [midarrow={>}] (B)--(C) node[font=\small, midway,right]{$g$};
		\draw [midarrow={>}] (A)--(C) node[font=\small, midway,below]{$fg$};
		\node at (1.5,1){$\circlearrowright$};		
		\draw [<->] (3.5,1.25)--(4.1,1.25);
	\end{scope}
\end{tikzpicture}
\begin{tikzpicture}[thick]
	\node[circle, label=left:{\small0}, fill, inner sep=.8pt, outer sep=0pt] (A) at (0,0){};
	\node[circle, label=above:{\small1}, fill, inner sep=.8pt, outer sep=0pt] (B) at (1.5,2.598075){};
	\node[circle, label=right:{\small2}, fill, inner sep=.8pt, outer sep=0pt] (C) at (3,0){};
	\node[circle, label=left: {\footnotesize3}, fill, inner sep=.8pt, outer sep=0pt] (D) at (1.5,0.9){};
	\begin{scope}
		\draw [midarrow={>}] (A)--(B) node[font=\small, midway,left]{$f$};
		\draw [midarrow={>}] (B)--(C) node[font=\small, midway,right]{$g$};
		\draw [midarrow={>}] (A)--(C) node[font=\small, midway,below]{$fg$};
		\draw [midarrow={>}] (B)--(D) node[font=\scriptsize, midway, below]{$\;\;\;\;\;\;gh$};
		\draw [midarrow={>}] (C)--(D) node[font=\footnotesize, midway, above]{$h$};
		\draw [midarrow={>}] (A)--(D) node[font=\scriptsize, midway, right]{$fgh$};
		\node at (1.1,1.3){$\circlearrowright$};
		\node at (1.9,1.1){$\circlearrowright$};
		\node at (1.7,0.3){$\circlearrowleft$};	
	\end{scope}
\end{tikzpicture}$$

where $f, g, h\in G$;

$$\begin{tikzpicture}[thick]
	\node[circle, label=left:{\small0}, fill, inner sep=.8pt, outer sep=0pt] (A) at (0,0){};
	\node[circle, label=above:{\small1}, fill, inner sep=.8pt, outer sep=0pt] (B) at (1.5,2.598075){};
	\node[circle, label=right:{\small2}, fill, inner sep=.8pt, outer sep=0pt] (C) at (3,0){};
	\begin{scope}
		\draw [midarrow={>}] (A)--(B) node[font=\small, midway,left]{$x$};
		\draw [midarrow={>}] (B)--(C) node[font=\small, midway,right]{$g$};
		\draw [midarrow={>}] (A)--(C) node[font=\small, midway,below]{$x\cdot g$};
		\fill [red, opacity=.7] (A) circle (3pt);
		\node at (1.5,1){$\circlearrowright$};				
		\draw [<->] (3.5,1.25)--(4.1,1.25);
	\end{scope}
\end{tikzpicture}
\begin{tikzpicture}[thick]
	\node[circle, label=left:{\small0}, fill, inner sep=.8pt, outer sep=0pt] (A) at (0,0){};
	\node[circle, label=above:{\small1}, fill, inner sep=.8pt, outer sep=0pt] (B) at (1.5,2.598075){};
	\node[circle, label=right:{\small2}, fill, inner sep=.8pt, outer sep=0pt] (C) at (3,0){};
	\node[circle, label=left: {\footnotesize3}, fill, inner sep=.8pt, outer sep=0pt] (D) at (1.5,0.9){};
	\begin{scope}
		\draw [midarrow={>}] (A)--(B) node[font=\small, midway,left]{$x$};
		\draw [midarrow={>}] (B)--(C) node[font=\small, midway,right]{$g$};
		\draw [midarrow={>}] (A)--(C) node[font=\small, midway,below]{$x\cdot g$};
		\draw [midarrow={>}] (B)--(D) node[font=\scriptsize, midway, below]{$\;\;\;\;\;\;gh$};
		\draw [midarrow={>}] (C)--(D) node[font=\footnotesize, midway, above]{$h$};
		\draw [midarrow={>}] (A)--(D) node[font=\scriptsize, midway, right]{$x\cdot gh$};
		\fill [red, opacity=.7] (A) circle (3pt);
		\node at (1.1,1.3){$\circlearrowright$};
		\node at (1.9,1.1){$\circlearrowright$};
		\node at (1.9,0.3){$\circlearrowleft$};					
	\end{scope}
\end{tikzpicture}$$

where $x\in X$ and $g, h\in G$;

$$\begin{tikzpicture}[thick]
	\node[circle, label=left:{\small0}, fill, inner sep=.8pt, outer sep=0pt] (A) at (0,0){};
	\node[circle, label=above:{\small1}, fill, inner sep=.8pt, outer sep=0pt] (B) at (1.5,2.598075){};
	\node[circle, label=right:{\small2}, fill, inner sep=.8pt, outer sep=0pt] (C) at (3,0){};
	\begin{scope}
		\draw [midarrow={>}] (A)--(B) node[font=\small, midway,left]{$\eta$};
		\draw [midarrow={>}] (B)--(C) node[font=\small, midway,right]{$x$};
		\draw [midarrow={>}] (A)--(C) node[font=\small, midway,below]{$\eta\cdot x$};
		\draw[line width=2pt, red, opacity=.7] (A)--(B);		
		\fill [red, opacity=.7] (A) circle (3pt);
		\fill [red, opacity=.7] (B) circle (3pt);
		\node at (1.5,1){$\circlearrowright$};			
		\draw [<->] (3.5,1.25)--(4.1,1.25);
	\end{scope}
\end{tikzpicture}
\begin{tikzpicture}[thick]
	\node[circle, label=left:{\small0}, fill, inner sep=.8pt, outer sep=0pt] (A) at (0,0){};
	\node[circle, label=above:{\small1}, fill, inner sep=.8pt, outer sep=0pt] (B) at (1.5,2.598075){};
	\node[circle, label=right:{\small2}, fill, inner sep=.8pt, outer sep=0pt] (C) at (3,0){};
	\node[circle, label=left: {\footnotesize3}, fill, inner sep=.8pt, outer sep=0pt] (D) at (1.5,0.9){};
	\begin{scope}
		\draw [midarrow={>}] (A)--(B) node[font=\small, midway,left]{$\eta$};
		\draw [midarrow={>}] (B)--(C) node[font=\small, midway,right]{$x$};
		\draw [midarrow={>}] (A)--(C) node[font=\small, midway,below]{$\eta\cdot x$};
		\draw [midarrow={>}] (B)--(D) node[font=\scriptsize, midway, below]{$\;\;\;\;\;\;\;x\cdot h$};
		\draw [midarrow={>}] (C)--(D) node[font=\footnotesize, midway, above]{$h$};
		\draw [midarrow={>}] (A)--(D) node[font=\scriptsize, midway, right]{$\eta\cdot x\cdot h$};
		\draw[line width=2pt, red, opacity=.7] (A)--(B);
		\fill [red, opacity=.7] (A) circle (3pt);
		\fill [red, opacity=.7] (B) circle (3pt);	
		\node at (1.1,1.3){$\circlearrowright$};
		\node at (1.9,1.1){$\circlearrowright$};
		\node at (2,0.3){$\circlearrowleft$};							
	\end{scope}
\end{tikzpicture}$$

where $\eta\in H$, $g, h\in G$, and $x\in X$.

Observe in each case that the well-defininedness of one of the labels depends on a generalization of associativity -- for the triangle not incident with the curve associativity within $G$, for the triangle intersecting the curve in a single vertex the associativity condition in the definition of a right group action, and for the triangle with an edge lying on the curve, the commutativity of the two group actions.  In each case we have inserted the new vertex after the vertices of the original triangle in the new ordering.  It is an easy, though tedious exercise, left to the reader, to verify that the same conclusion holds if the new vertex is placed anywhere else in the ordering of the vertices not on the curve.

Another easy exercise, also left to the reader, is to verify that if the edges of the initial state of a 2-2 move are admissibly colored, there is a unique admissible coloring of the final state agreeing with the given coloring on the edges which are not changed by the 2-2 move.  Again, one case uses each analog of associativity.

Finally, we consider the behavior of admissible colorings under the extended Pachner move subdividing an edge of the curve:

$$\begin{tikzpicture}[thick] 
	\node[circle, label=left:{\small2}, fill, inner sep=.8pt, outer sep=0pt] (A) at (0,0){};
	\node[circle, label=above:{\small1}, fill, inner sep=.8pt, outer sep=0pt] (B) at (1.5,2.598075){};
	\node[circle, label=right:{\small0}, fill, inner sep=.8pt, outer sep=0pt] (C) at (3,0){};
	\node[circle, label=above:{\small3}, fill, inner sep=.8pt, outer sep=0pt] (D) at (4.5,2.598075){};
	\begin{scope}
		\draw [midarrow={<}] (A)--(B) node[font=\small, midway, left]{$x$};
		\draw [midarrow={>}] (B)--(D) node[font=\small, midway, above]{$y$};
		\draw [midarrow={<}] (D)--(C) node[font=\small, midway, below]{$\;\;\;\;\;\;\eta\cdot y$};
		\draw [midarrow={<}] (A)--(C) node[font=\small, midway, below]{$\eta\cdot x$};
		\draw [midarrow={<}] (B)--(C) node[font=\footnotesize, midway, right]{$\eta$};
		\draw [<->] (4.7,1.25)--(5.35,1.25);
		\fill[red, opacity=.7] (B) circle (3pt);
		\fill[red, opacity=.7] (C) circle (3pt);
		\draw[line width=2pt, red, opacity=.7] (B)--(C);
	    	\node at (1.5,1){$\circlearrowleft$};
		\node at (3,1.7){$\circlearrowright$};		
	\end{scope}
\end{tikzpicture}
\begin{tikzpicture}[thick] 
	\node[circle, label=left:{\small2}, fill, inner sep=.8pt, outer sep=0pt] (A) at (0,0){};
	\node[circle, label=above:{\small1}, fill, inner sep=.8pt, outer sep=0pt] (B) at (1.5,2.598075){};
	\node[circle, label=right:{\small0}, fill, inner sep=.8pt, outer sep=0pt] (C) at (3,0){};
	\node[circle, label=above:{\small3}, fill, inner sep=.8pt, outer sep=0pt] (D) at (4.5,2.598075){};
	\node[circle, label=left:{\footnotesize-1}, fill, inner sep=.8pt, outer sep=0pt] (E) at (2.25,1.299){};
	\begin{scope}
		\draw [midarrow={<}] (A)--(B) node[font=\small, midway, left]{$x$};
		\draw [midarrow={>}] (B)--(D) node[font=\small, midway, above]{$y$};
		\draw [midarrow={<}] (D)--(C) node[font=\small, midway, below]{$\;\;\;\;\;\;\eta\cdot y$};
		\draw [midarrow={<}] (A)--(C) node[font=\small, midway, below]{$\eta\cdot x$};
		\draw [midarrow={>}] (E)--(C) node[font=\footnotesize, midway, right]{$\theta$};
		\draw [midarrow={>}] (E)--(B) node[font=\footnotesize, midway, right]{$\theta\eta$};	
		\draw [midarrow={>}] (E)--(D) node[font=\tiny, midway, below]{$\;\;\theta\eta\cdot y$};
		\draw [midarrow={>}] (E)--(A) node[font=\tiny, midway, above]{$\theta\eta\cdot x\;$};
		\fill[red, opacity=.7] (B) circle (3pt);
		\fill[red, opacity=.7] (C) circle (3pt);
		\fill[red, opacity=.7] (E) circle (3pt);		
		\draw[line width=2pt, red, opacity=.7] (B)--(C);
	     	\node at (1.5,1.7){$\circlearrowleft$};
		\node at (2,0.5){$\circlearrowright$};
		\node at (2.7,2.2){$\circlearrowright$};
		\node at (3.1,1){$\circlearrowleft$};		
	\end{scope}
\end{tikzpicture}$$

Again the admissible coloring of the welded state, together with a choice, this time of an element $\theta \in H$, determines a unique admissible coloring of the subdivided state.  Note, here we chose to introduce the new vertex into the ordering of vertices earlier than all the vertices in the welded state.  Again we leave it to the reader to verify that the same conclusion holds if the new vertex is inserted anywhere else in the ordering of the vertices on the curve.

From the foregoing discussion it is essentially immediate that we have a generalization of the invariants of surfaces arising from ``untwisted'' 2-dimensional Dijkgraaf-Witten theory to surface-curve pairs:

\begin{theorem} \label{untwisted}
Let $G$ and $H$ be finite groups, and $X$ a set equipped commuting group actions of $G$ on the right and $H$ on the left.  For any flag-like triangulation of a surface-curve pair $\Sigma \supset C$, $\mathcal T$, with vertices ordered as described above, let $\kappa_{(H,X,G)}({\mathcal T})$ denote the number of admissible $(H,X,G)$-colorings of $\mathcal T$, and ${\mathcal T}_n^k$ denote the set of $n$-simplexes of $\mathcal T$ with $k$ vertices lying on the curve, then the quantity

\[ Z_{(H,X,G)}(\Sigma \supset C) := |G|^{-|{\mathcal T}_0^0|} |H|^{-|{\mathcal T}_0^1|} \kappa_{(H,X,G)}({\mathcal T}) \]

\noindent is independent of the triangulation and the ordering on the vertices, and thus a topological invariant of the surface-curve pair $\Sigma \supset C$.

\end{theorem}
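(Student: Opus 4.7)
The plan is to invoke Theorem~\ref{Alexander_moves_suffice} together with Proposition~\ref{extended_Pachner_moves_suffice}, which reduce the question of triangulation-independence to checking invariance of $Z_{(H,X,G)}$ under each of the seven inverse pairs of flag-like extended Pachner moves. Independence from the enumeration of vertices will be handled separately by an adjacent-transposition argument.

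For the Pachner moves I would organize the cases by the creation (or not) of a new vertex, and by whether that new vertex lies on $C$. The three 2-2 moves create no new vertex; for each, the discussion preceding the theorem (with the routine verifications explicitly left to the reader) exhibits a bijection between admissible colorings of the two configurations, so $\kappa_{(H,X,G)}$ is preserved while the normalization factors are trivially unchanged. The three 1-3 subdivision moves each introduce a single interior vertex of a triangle, which lies off $C$ in all three cases; the figures in the preceding discussion parameterize admissible colorings of the subdivided state by an admissible coloring of the coarse state together with one free $G$-label on the new edge from the latest-ordered original vertex to the new vertex. Well-definedness of the extension uses, respectively, associativity in $G$, associativity of the right $G$-action on $X$, and the commutativity of the two actions on $X$; uniqueness uses the invertibility of group multiplication and of the action maps $x\mapsto x\cdot g$ and $x\mapsto \eta\cdot x$. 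Thus $\kappa_{(H,X,G)}$ is multiplied by $|G|$ while $|{\mathcal T}_0^0|$ increases by $1$, giving exact cancellation with the $|G|^{-|{\mathcal T}_0^0|}$ factor. The one genuinely extended Pachner move subdivides an edge of $C$; here the new vertex lies on $C$, so $\kappa_{(H,X,G)}$ is multiplied by $|H|$ (the free parameter being $\theta\in H$) while $|{\mathcal T}_0^1|$ increases by $1$, again producing exact cancellation.

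For independence from the vertex enumeration I would decompose any legal re-enumeration into adjacent transpositions within the curve block and within the non-curve block. Such a transposition reverses the orientation of at most one edge and can alter which edges of the incident 2-simplexes are ``oriented in the same direction,'' but each admissibility equation $fg=h$, $x\cdot g = y$, or $\eta\cdot x = y$ is uniquely solvable for any of its arguments given the other two, so replacing the affected edge label by its inverse in $G$ or $H$ provides an explicit bijection on admissible colorings. Since $|{\mathcal T}_0^k|$ does not depend on the ordering, $Z_{(H,X,G)}$ is unchanged.

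The main anticipated obstacle is the bookkeeping for the curve-edge subdivision move, where four new edges appear (two on $C$ labelled in $H$ and two off $C$ labelled in $X$) emanating from the new vertex. One must verify that the single choice $\theta\in H$ determines all four new labels compatibly, simultaneously satisfying the $H$-action admissibility on the two new triangles with an edge on $C$ and the commuting-actions condition on the two other new triangles having only the new vertex on $C$. This amounts to a short diagram-chase using the identity $h\cdot(x\cdot g)=(h\cdot x)\cdot g$, but it is the step that requires the most care.
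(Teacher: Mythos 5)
Your proposal is correct, and for the core of the theorem---triangulation independence---it follows the paper's route exactly: reduce to the seven flag-like extended Pachner moves via Theorem~\ref{Alexander_moves_suffice} and Proposition~\ref{extended_Pachner_moves_suffice}, observe that the 2-2 moves induce bijections on admissible colorings, that the three 1-3 moves multiply $\kappa_{(H,X,G)}$ by $|G|$ while $|{\mathcal T}_0^0|$ grows by one, and that the extended 1-2 move on the curve multiplies it by $|H|$ while $|{\mathcal T}_0^1|$ grows by one, so the normalization cancels everything. Where you genuinely diverge is in the independence from the vertex enumeration. The paper handles this by a global trick: perform flag-like extended Pachner moves until the triangulation shares no vertices with the original, then undo them in reverse, reinserting the original vertices in the new desired order; this reuses the already-established move invariance and requires no new local computation. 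You instead decompose the re-enumeration into adjacent transpositions within the curve block and within the off-curve block and exhibit an explicit bijection on admissible colorings by inverting the label of the one reversed edge. Your argument is more elementary and self-contained, and it works: an adjacent transposition reverses at most the single edge joining the two swapped vertices, that edge is necessarily a $G$-edge or an $H$-edge (never an $X$-edge, since $X$-edges join a curve vertex to an off-curve vertex and the blocks are transposed separately), and each of the admissibility equations $fg=h$, $x\cdot g=y$, $\eta\cdot x=y$ is uniquely solvable for the reversed slot, so relabeling by the inverse is a bijection. The cost is the case analysis over triangle types that the paper's trick sidesteps; the benefit is that you do not need to know that a triangulation disjoint (in vertices) from the original can be reached, which the paper asserts without comment. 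Your flagged concern about the curve-edge subdivision is exactly the right place to be careful, and the resolution you sketch (the single choice of $\theta$ propagates via $h\cdot(x\cdot g)=(h\cdot x)\cdot g$) matches the paper's figure.
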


\begin{proof}
Having observed above that extended Pachner moves which introduce a vertex on (resp. off) the curve replace an admissible coloring with $|H|$ (resp. $|G|$) admissible colorings, this follows immediately from Proposition \ref{extended_Pachner_moves_suffice}.  The one thing to note is that the independence from the ordering of the vertices follows from the same proposition by the simple trick of performing (flag-like) extended Pachner moves to obtain a triangulation with no vertices in common with the original, then performing the inverses of the same moves in reverse sequence, but inserting the original vertices into the ordering to agree with some other desired ordering, rather than the given one.
\end{proof}

Of course it is easy to express the invariant of Theorem \ref{untwisted} as a state-sum, since $\kappa_{(H,X,G)}({\mathcal T}) = \sum_{\lambda} \prod_{\sigma \in {\mathcal T}_2} \chi(\lambda, \sigma))$,
where $\lambda$ ranges over all $(H,X,G)$-colorings and 

\[ \chi(\lambda, \sigma) = \left\{\begin{array}{ll} 1 & \mbox{\rm if $\lambda$ colors $\sigma$ admissibly} \\ 0 & \mbox{\rm otherwise} \end{array} \right.\]

\section{Generalized 2-cocycles}

The invariant just described is the natural generalization of ``untwisted'' 2-dimensional Dijkgraaf-Witten theory from surfaces to surface-curve pairs, but in general $n$-dimensional Dijkgraaf-Witten theory is described by the (finite) gauge group, together with an $n$-cocycle valued in the multiplicative group of a chosen ground field, $K$, which provides local state values for $n$-simplexes, explicitly in the case of $n=2$ a function $\alpha: G \times G \rightarrow K^\times$ satisfying

\begin{equation} \label{cond1}
\forall f,g,h \in G\;\; \alpha(f,g)\alpha^{-1}(f, gh)\alpha(fg, h)\alpha^{-1}(g,h)=1
\end{equation}

The corresponding invariant of oriented surfaces is then given in terms of a triangulation with ordered vertices $\mathcal T$ by

\[ Z_{(G,\alpha)}(\Sigma) = |G|^{-|{\mathcal T}_0|} \sum_\lambda \prod_{\sigma \in {\mathcal T}_2} \alpha^{\epsilon(\sigma)}(\tilde{\lambda}(\sigma)) \]

\noindent where $\lambda$ ranges over all admissible colorings, $\tilde{\lambda}(\sigma)$ is the 
pair of elements of $G$ coloring the edges 01 and 12 of $\sigma$, and 

\[ \epsilon(\sigma) = \left\{\begin{array}{ll} 1 & \mbox{\rm if the orientation of $\sigma$ induced by the ordering}\\ & \mbox{\rm \hspace*{5mm} agrees with the orientation of the surface} \\ -1 & \mbox{\rm if the orientation of $\sigma$ induced by the ordering}\\ & \mbox{\rm \hspace*{5mm} is opposite the orientation of the surface} \end{array} \right.\]

It is an easy exercise, similar to calculations we are about to give for instances of our local state values on triangles intersecting the curve, to verify first that the 2-cocycle condition is precisely the condition needed to ensure that when the initial state of a 2-2 Pachner move is admissibly colored, and the final state is given the unique admissible coloring agreeing with the given one on the unmodified edges, the products of the quantities $\alpha^{\epsilon(\sigma)}(\tilde{\lambda}(\sigma))$ over the initial and final pairs of triangles are equal, and second that the 2-cocycle condition also implies that the product of $\alpha^{\epsilon(\sigma)}(\tilde{\lambda}(\sigma))$ over the three triangles of the subdivided state of a 1-3 Pachner move is equal to to the value of $\alpha^{\epsilon(\sigma)}(\tilde{\lambda}(\sigma))$ on the single unsubdivided triangle, whenever $\lambda$ is taken to be an admissible coloring of the triangle in the latter case and any of the admissible colorings induced by $\lambda$ and a choice of label for a new edge in the former.

We, of course, require local state values for triangles with a vertex or an edge on the curve, and thus will consider a triple of functions, $(\alpha, \beta, \gamma)$

$$\begin{tabular}{c}
$\alpha: G\times G \to K^\times$\\
$\beta: X\times G\to K^\times$\\
$\gamma: H\times X\to K^\times$
\end{tabular}$$

Plainly the conditions required for invariance under Pachner moves on cells not intersecting the curve are the same as for ordinary Dijkgraaf-Witten theory and require that $\alpha$ be a 2-cocycle on the group $G$.

Now consider a 2-2 Pachner move with a single vertex on the curve:

$$\begin{tikzpicture}[thick] 
	\node[circle, label=left:{\small0}, fill, inner sep=.8pt, outer sep=0pt] (A) at (0,0){};
	\node[circle, label=above:{\small1}, fill, inner sep=.8pt, outer sep=0pt] (B) at (1.5,2.598075){};
	\node[circle, label=right:{\small3}, fill, inner sep=.8pt, outer sep=0pt] (C) at (3,0){};
	\node[circle, label=above:{\small2}, fill, inner sep=.8pt, outer sep=0pt] (D) at (4.5,2.598075){};
	\begin{scope}
		\draw [midarrow={>}] (A)--(B) node[font=\small, midway, left]{$x$};
		\draw [midarrow={>}] (B)--(D) node[font=\small, midway, above]{$g$};
		\draw [midarrow={>}] (D)--(C) node[font=\small, midway, right]{$h$};
		\draw [midarrow={>}] (A)--(C) node[font=\small, midway, below]{$x\cdot gh$};	
		\draw [midarrow={>}] (B)--(C) node[font=\footnotesize, midway, right]{$gh$};
		\node at (1.5,1){$\circlearrowright$};
		\node at (3,1.7){$\circlearrowright$};
		\draw [<->] (4.7,1.25)--(5.35,1.25);
		\fill[red, opacity=.7] (A) circle (3pt);
	\end{scope}
\end{tikzpicture}
\begin{tikzpicture}[thick] 
	\node[circle, label=left:{\small0}, fill, inner sep=.8pt, outer sep=0pt] (A) at (0,0){};
	\node[circle, label=above:{\small1}, fill, inner sep=.8pt, outer sep=0pt] (B) at (1.5,2.598075){};
	\node[circle, label=right:{\small3}, fill, inner sep=.8pt, outer sep=0pt] (C) at (3,0){};
	\node[circle, label=above:{\small2}, fill, inner sep=.8pt, outer sep=0pt] (D) at (4.5,2.598075){};
	\begin{scope}
		\draw [midarrow={>}] (A)--(B) node[font=\small, midway, left]{$x$};
		\draw [midarrow={>}] (B)--(D) node[font=\small, midway, above]{$g$};
		\draw [midarrow={>}] (D)--(C) node[font=\small, midway, right]{$h$};
		\draw [midarrow={>}] (A)--(C) node[font=\small, midway, below]{$x\cdot gh$};	
		\draw [midarrow={>}] (A)--(D) node[font=\footnotesize, midway, above]{$x \cdot g$};
	     	\node at (1.9,2.0){$\circlearrowright$};
		\node at (2.6,0.8){$\circlearrowright$};
		\fill[red, opacity=.7] (A) circle (3pt);
	\end{scope}
\end{tikzpicture}$$

The choice of $x \in X$ and $g,h \in G$ determine unique admissible colorings of both the initial and final state and require the equation

$$\beta(x, gh)\alpha(g, h)=\beta(x, g)\beta(x\cdot g, h)$$

\noindent for the product of the local state values to remain unchanged by the move.  Solving to put all the factors on one side gives a cocycle-type equation 

\begin{equation} \label{cond2}
 \forall x \in X\; g,h\in G\;\; \beta(x, g)\beta^{-1}(x, gh)\beta(x\cdot g, h)\alpha^{-1}(g, h)=1 
\end{equation}

\noindent which also ensures that the product of local state values remains unchanged when an admissible coloring of a triangle with a single vertex on the curve is replaced with the admissible coloring corresponding to any choice of a label from $G$ for a new edge not incident with the curve after a 1-3 subdivision:

$$\begin{tikzpicture}[thick] 
	\node[circle, label=left:{\small0}, fill, inner sep=.8pt, outer sep=0pt] (A) at (0,0){};
	\node[circle, label=above:{\small1}, fill, inner sep=.8pt, outer sep=0pt] (B) at (1.5,2.598075){};
	\node[circle, label=right:{\small2}, fill, inner sep=.8pt, outer sep=0pt] (C) at (3,0){};
	\begin{scope}
		\draw [midarrow={>}] (A)--(B) node[font=\small, midway, left]{$x$};
		\draw [midarrow={>}] (B)--(C) node[font=\small, midway, right]{$g$};
		\draw [midarrow={>}] (A)--(C) node[font=\small, midway, below]{$x\cdot g$};
		\draw [<->] (3.5,1.25)--(4.1,1.25);
		\node at (1.5,1){$\circlearrowright$};
		\fill[red, opacity=.7] (A) circle (3pt);
	\end{scope}
\end{tikzpicture}
\begin{tikzpicture}[thick] 
	\node[circle, label=left:{\small0}, fill, inner sep=.8pt, outer sep=0pt] (A) at (0,0){};
	\node[circle, label=above:{\small1}, fill, inner sep=.8pt, outer sep=0pt] (B) at (1.5,2.598075){};
	\node[circle, label=right:{\small2}, fill, inner sep=.8pt, outer sep=0pt] (C) at (3,0){};
	\node[circle, label=left: {\scriptsize 3}, fill, inner sep=.8pt, outer sep=0pt] (D) at (1.5,0.9){};
	\begin{scope}
		\draw [midarrow={>}] (A)--(B) node[font=\small, midway, left]{$x$};
		\draw [midarrow={>}] (B)--(C) node[font=\small, midway, right]{$g$};
		\draw [midarrow={>}] (A)--(C) node[font=\small, midway, below]{$x\cdot g$};
		\draw [midarrow={>}] (C)--(D) node[font=\footnotesize, midway, above]{$h$};
		\draw [midarrow={>}] (B)--(D) node[font=\tiny, midway, below]{$gh\;\;\;\;\;$};
		\draw [midarrow={>}] (A)--(D) node[font=\tiny, midway, right]{$x\cdot gh$};
		\node at (1.1,1.15){$\circlearrowright$};
		\node at (1.8,0.3){$\circlearrowleft$};
		\node at (1.9,1.15){$\circlearrowright$};
		\fill[red, opacity=.7] (A) circle (3pt);
	\end{scope}
\end{tikzpicture}$$

Similarly, considering a 2-2 Pachner move with an edge in the boundary of the cell lying on the curve 

$$\begin{tikzpicture}[thick] 
	\node[circle, label=left:{\small0}, fill, inner sep=.8pt, outer sep=0pt] (A) at (0,0){};
	\node[circle, label=above:{\small1}, fill, inner sep=.8pt, outer sep=0pt] (B) at (1.5,2.598075){};
	\node[circle, label=right:{\small3}, fill, inner sep=.8pt, outer sep=0pt] (C) at (3,0){};
	\node[circle, label=above:{\small2}, fill, inner sep=.8pt, outer sep=0pt] (D) at (4.5,2.598075){};
	\draw [midarrow={>}] (A)--(B) node[font=\small, midway, left]{$\eta$};
	\draw [midarrow={>}] (B)--(D) node[font=\small, midway, above]{$x$};
	\draw [midarrow={>}] (D)--(C) node[font=\small, midway, below]{$\;g$};
	\draw [midarrow={>}] (A)--(C) node[font=\small, midway, below]{$\eta\cdot x\cdot g$};
	\draw [midarrow={>}] (B)--(C) node[font=\footnotesize, midway, right]{$x\cdot g$};
    	\node at (1.5,1){$\circlearrowright$};
	\node at (3,1.7){$\circlearrowright$};
	\draw [<->] (4.7,1.25)--(5.35,1.25);
	\fill[red, opacity=.7] (A) circle (3pt);
	\fill[red, opacity=.7] (B) circle (3pt);
	\draw[line width=2pt, red, opacity=.7] (A)--(B);
\end{tikzpicture}
\begin{tikzpicture}[thick] 
	\node[circle, label=left:{\small0}, fill, inner sep=.8pt, outer sep=0pt] (A) at (0,0){};
	\node[circle, label=above:{\small1}, fill, inner sep=.8pt, outer sep=0pt] (B) at (1.5,2.598075){};
	\node[circle, label=right:{\small3}, fill, inner sep=.8pt, outer sep=0pt] (C) at (3,0){};
	\node[circle, label=above:{\small2}, fill, inner sep=.8pt, outer sep=0pt] (D) at (4.5,2.598075){};
	\draw [midarrow={>}] (A)--(B) node[font=\small, midway, left]{$\eta$};
	\draw [midarrow={>}] (B)--(D) node[font=\small, midway, above]{$x$};
	\draw [midarrow={>}] (D)--(C) node[font=\small, midway, below]{$\;g$};
	\draw [midarrow={>}] (A)--(C) node[font=\small, midway, below]{$\eta\cdot x\cdot g$};	
	\draw [midarrow={>}] (A)--(D) node[font=\footnotesize, midway, above]{$\eta\cdot x$};
     \node (E) at (1.9,2.0){$\circlearrowright$};
	 \node (F) at (2.6,0.8){$\circlearrowright$};
	\fill[red, opacity=.7] (A) circle (3pt);
	\fill[red, opacity=.7] (B) circle (3pt);
	\draw[line width=2pt, red, opacity=.7] (A)--(B);
\end{tikzpicture}$$

\noindent requires the condition

\begin{equation} \label{cond3}
 \forall \eta \in H\; x \in X\; g \in G\;\; \gamma(\eta, x)\gamma^{-1}(\eta, x\cdot g)\beta(\eta\cdot x, g)\beta^{-1}(x, g)=1 
\end{equation}

\noindent to ensure invariance under the move, which condition also ensures that the product of local state values is unchanged in passing from an admissibly colored triangle to any of the admissible colorings of the 3 triangles induced by choosing an element of $G$ with which to label the one new edge not incident with the curve introduced by a 1-3 move, as, for example

$$\begin{tikzpicture}[thick] 
	\node[circle, label=left:{\small0}, fill, inner sep=.8pt, outer sep=0pt] (A) at (0,0){};
	\node[circle, label=above:{\small1}, fill, inner sep=.8pt, outer sep=0pt] (B) at (1.5,2.598075){};
	\node[circle, label=right:{\small2}, fill, inner sep=.8pt, outer sep=0pt] (C) at (3,0){};
	\begin{scope}
		\draw [midarrow={>}] (A)--(B) node[font=\small, midway,left]{$\eta$};
		\draw [midarrow={>}] (B)--(C) node[font=\small, midway,right]{$x$};
		\draw [midarrow={>}] (A)--(C) node[font=\small, midway,below]{$\eta\cdot x$};
		\draw [line width=2pt, red, opacity=.7] (A)--(B);
		\fill [red, opacity=.7] (A) circle (3pt);
		\fill [red, opacity=.7] (B) circle (3pt);
		\node at (1.5,1){$\circlearrowright$};
		\draw [<->] (3.5,1.25)--(4.1,1.25);
	\end{scope}
\end{tikzpicture} 
\begin{tikzpicture}[thick] 
	\node[circle, label=left:{\small0}, fill, inner sep=.8pt, outer sep=0pt] (A) at (0,0){};
	\node[circle, label=above:{\small1}, fill, inner sep=.8pt, outer sep=0pt] (B) at (1.5,2.598075){};
	\node[circle, label=right:{\small2}, fill, inner sep=.8pt, outer sep=0pt] (C) at (3,0){};
	\node[circle, label=left: {\footnotesize3}, fill, inner sep=.8pt, outer sep=0pt] (D) at (1.5,0.9){};
	\begin{scope}
		\draw [midarrow={>}] (A)--(B) node[font=\small, midway,left]{$\eta$};
		\draw [midarrow={>}] (B)--(C) node[font=\small, midway,right]{$x$};
		\draw [midarrow={>}] (A)--(C) node[font=\small, midway,below]{$\eta\cdot x$};
		\draw [line width=2pt, red, opacity=.7] (A)--(B);
		\fill [red, opacity=.7] (A) circle (3pt);
		\fill [red, opacity=.7] (B) circle (3pt);
		\draw [midarrow={>}] (B)--(D) node[font=\tiny, midway, below]{$\;\;\;\;\;\;x\cdot g$};
		\draw [midarrow={>}] (C)--(D) node[font=\footnotesize, midway, above]{$g$};
		\draw [midarrow={>}] (A)--(D) node[font=\tiny, midway, right]{$\;\eta\cdot x\cdot g$};
		\node (E) at (1.1,1.3){$\circlearrowright$};
		\node (F) at (2,1.1){$\circlearrowright$};
		\node (G) at (1.9,0.25){$\circlearrowleft$};
	\end{scope}
\end{tikzpicture}$$

Invariance under the extended 1-2 move on the curve requires a fourth condition:

$$\begin{tikzpicture}[thick] 
	\node[circle, label=left:{\small2}, fill, inner sep=.8pt, outer sep=0pt] (A) at (0,0){};
	\node[circle, label=above:{\small1}, fill, inner sep=.8pt, outer sep=0pt] (B) at (1.5,2.598075){};
	\node[circle, label=right:{\small0}, fill, inner sep=.8pt, outer sep=0pt] (C) at (3,0){};
	\node[circle, label=above:{\small3}, fill, inner sep=.8pt, outer sep=0pt] (D) at (4.5,2.598075){};
	\begin{scope}
		\draw [midarrow={<}] (A)--(B) node[font=\small, midway, left]{$x$};
		\draw [midarrow={>}] (B)--(D) node[font=\small, midway, above]{$y$};
		\draw [midarrow={<}] (D)--(C) node[font=\small, midway, below]{$\;\;\;\;\;\;\eta\cdot y$};
		\draw [midarrow={<}] (A)--(C) node[font=\small, midway, below]{$\eta\cdot x$};
		\draw [midarrow={<}] (B)--(C) node[font=\footnotesize, midway, right]{$\eta$};
	    	\node at (1.5,1){$\circlearrowleft$};
		\node at (3,1.7){$\circlearrowright$};
		\draw [<->] (4.7,1.25)--(5.35,1.25);
		\fill[red, opacity=.7] (B) circle (3pt);
		\fill[red, opacity=.7] (C) circle (3pt);
		\draw[line width=2pt, red, opacity=.7] (B)--(C);
	\end{scope}
\end{tikzpicture}
\begin{tikzpicture}[thick] 
	\node[circle, label=left:{\small2}, fill, inner sep=.8pt, outer sep=0pt] (A) at (0,0){};
	\node[circle, label=above:{\small1}, fill, inner sep=.8pt, outer sep=0pt] (B) at (1.5,2.598075){};
	\node[circle, label=right:{\small0}, fill, inner sep=.8pt, outer sep=0pt] (C) at (3,0){};
	\node[circle, label=above:{\small3}, fill, inner sep=.8pt, outer sep=0pt] (D) at (4.5,2.598075){};
	\node[circle, label=left:{\footnotesize-1}, fill, inner sep=.8pt, outer sep=0pt] (E) at (2.25,1.299){};
	\begin{scope}
		\draw [midarrow={<}] (A)--(B) node[font=\small, midway, left]{$x$};
		\draw [midarrow={>}] (B)--(D) node[font=\small, midway, above]{$y$};
		\draw [midarrow={<}] (D)--(C) node[font=\small, midway, below]{$\;\;\;\;\;\;\eta\cdot y$};
		\draw [midarrow={<}] (A)--(C) node[font=\small, midway, below]{$\eta\cdot x$};
		\draw [midarrow={>}] (E)--(C) node[font=\footnotesize, midway, right]{$\theta$};
		\draw [midarrow={>}] (E)--(B) node[font=\footnotesize, midway, right]{$\theta\eta$};	
		\draw [midarrow={>}] (E)--(D) node[font=\tiny, midway, below]{$\;\;\theta\eta\cdot y$};
		\draw [midarrow={>}] (E)--(A) node[font=\tiny, midway, above]{$\theta\eta\cdot x\;$};
	     	\node at (1.5,1.7){$\circlearrowleft$};
		\node at (2,0.5){$\circlearrowright$};
		\node at (2.7,2.2){$\circlearrowright$};
		\node at (3.1,1){$\circlearrowleft$};
		\fill[red, opacity=.7] (B) circle (3pt);
		\fill[red, opacity=.7] (C) circle (3pt);
		\fill[red, opacity=.7] (E) circle (3pt);		
		\draw[line width=2pt, red, opacity=.7] (B)--(C);
	\end{scope}
\end{tikzpicture}$$
$$\gamma^{-1}(\eta, x)\gamma(\eta, y)=\gamma^{-1}(\theta\eta, x)\gamma(\theta\eta, y)\gamma^{-1}(\theta, \eta\cdot y)\gamma(\theta, \eta\cdot x)$$

Or, solving to move all the factors to one side, 

\begin{equation} \label{cond4}
\begin{split}
\forall x, y\in X \; & \eta, \theta\in H  \;\;\;
\gamma(\theta, \eta\cdot y)\gamma^{-1}(\theta\eta, y)\cdot \\
& \gamma(\eta, y)\gamma^{-1}(\theta, \eta\cdot x)\gamma(\theta\eta, x)\gamma^{-1}(\eta, x)  = 1.
\end{split}
\end{equation}

The foregoing discussion has almost established our main theorem:

\begin{theorem}
Let $G$, $H$ be groups with commuting group actions on a set $X$.  For a field $K$, let $\alpha: G\times G\to K^\times$, $\beta: X\times G\to K^\times$, and $\gamma: H\times X\to K^\times$ be functions that satisfy, $\forall f, g, h\in G,\; x, y \in X,\; \eta, \theta \in H$,
\begin{equation}
	\alpha(f,g)\alpha^{-1}(f, gh)\alpha(fg, h)\alpha^{-1}(g,h)=1 \tag{\ref{cond1}}
\end{equation}
\begin{equation}
	\beta(x, g)\beta^{-1}(x,gh)\beta(x\cdot g, h)\alpha^{-1}(g, h)=1 \tag{\ref{cond2}}
\end{equation}
\begin{equation}
	\gamma(\eta, x)\gamma^{-1}(\eta, x\cdot g)\beta(\eta\cdot x, g)\beta^{-1}(x, g)=1 \tag{\ref{cond3}}
\end{equation}
\begin{equation}
	\gamma(\theta, \eta\cdot y)\gamma^{-1}(\theta\eta, y)\gamma(\eta, y)\gamma^{-1}(\theta, \eta\cdot x)\gamma(\theta\eta, x)\gamma^{-1}(\eta, x)=1  \tag{\ref{cond4}}
\end{equation}

For any flag-like triangulation $\mathcal{T}$, let 
$$Z(\Sigma, C) := |G|^{-|\mathcal{T}_0^0|}|H|^{-|\mathcal{T}_0^1|}\sum_{\lambda \in \text{adm}(\mathcal{T})}
	\left(\prod_{\sigma\in\mathcal{T}_2^0}\alpha^{\epsilon(\sigma)}(\tilde{\lambda}(\sigma))\right)\cdot$$$$
	\left(\prod_{\sigma\in\mathcal{T}_2^1}\beta^{\epsilon(\sigma)}(\tilde{\lambda}(\sigma))\right)\cdot
	\left(\prod_{\sigma\in\mathcal{T}_2^2}\gamma^{\epsilon(\sigma)}(\tilde{\lambda}(\sigma))\right).$$

\noindent where $\text{adm}(\mathcal{T})$ denotes the set of admissible $(H,X,G)$-colorings of $\mathcal T$ and $\epsilon(\sigma)$ and $\tilde{\lambda}(\sigma)$ are as above.
Then $Z(\Sigma, C)$ is independent of $\mathcal{T}$ and therefore is a topological invariant of $\Sigma\supseteq C$.
\end{theorem}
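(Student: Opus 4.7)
The plan is to invoke Proposition~\ref{extended_Pachner_moves_suffice} to reduce PL-topological invariance to invariance of $Z(\Sigma,C)$ under each of the seven inverse pairs of flag-like extended Pachner moves. For each move I would hold fixed the coloring on edges that are not altered and compare the contribution of the affected triangles to the state-sum on the two sides. Most of the required identities have in fact already been extracted in the discussion preceding the theorem, where conditions (\ref{cond1})--(\ref{cond4}) were obtained precisely by demanding local invariance under the 2-2 moves and the extended 1-2 move that disturbs an edge of the curve.

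For the three flag-like 2-2 moves, no vertices are gained or lost, so the normalization prefactor is unchanged, and it suffices to check that $\prod_\sigma \alpha^{\epsilon(\sigma)}(\tilde\lambda(\sigma))$, $\prod_\sigma\beta^{\epsilon(\sigma)}(\tilde\lambda(\sigma))$, and $\prod_\sigma\gamma^{\epsilon(\sigma)}(\tilde\lambda(\sigma))$ agree on the two sides of the move. Conditions (\ref{cond1}), (\ref{cond2}), and (\ref{cond3}), respectively, are exactly these equalities rewritten with all factors on one side. One must verify that the orientation convention (curve vertices ordered first, then the remaining vertices, and edges oriented from lower to higher index) makes the signs $\epsilon(\sigma)$ on the two 2-simplexes of each side of a 2-2 move opposite in the appropriate way, matching the pattern of the defect-free case of \cite{W}.

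For the three flag-like 1-3 moves (the first three types in the list), a new vertex disjoint from the curve is inserted, along with one new edge not incident with the curve whose label in $G$ is free. Given the admissible coloring of the unsubdivided triangle, each choice of this label yields a unique admissible coloring of the three new triangles (by the associativity of group multiplication and of the right $G$-action on $X$), and conditions (\ref{cond1}), (\ref{cond2}), (\ref{cond3}) ensure that the product of local state values is unchanged for each such extension. Summing over the $|G|$ extensions multiplies $Z$ by $|G|$, which is exactly cancelled by the new $|G|^{-1}$ factor in the normalization coming from the increment of $|\mathcal{T}_0^0|$. The analogous argument for the one flag-like extended 1-2 move on the curve introduces a new vertex on $C$ together with a free choice of $\theta\in H$; condition (\ref{cond4}) guarantees the local state product is unchanged for each choice, and the resulting factor of $|H|$ is absorbed by the new $|H|^{-1}$ factor tied to $|\mathcal{T}_0^1|$. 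Independence from the ordering of vertices then follows by the same device as in the proof of Theorem~\ref{untwisted}: perform flag-like extended Pachner moves to reach a triangulation sharing no vertices with the original, and then invert them in reverse while inserting the original vertices into a desired alternative position in the ordering.

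The main obstacle is bookkeeping rather than any new conceptual ingredient: one must check each of the seven Pachner configurations in turn, verify the $\epsilon(\sigma)$-signs, and for the triangles touching the curve use the appropriate generalization of associativity (i.e., the group axioms, the axioms of a right $G$-set, the axioms of a left $H$-set, and crucially the commutation $h\cdot(x\cdot g)=(h\cdot x)\cdot g$) to see that the claimed admissible extensions really are well-defined and unique. Once this case analysis is completed, the four conditions (\ref{cond1})--(\ref{cond4}) are precisely what is needed, and the theorem follows.
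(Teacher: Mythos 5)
Your overall strategy is the paper's: reduce to the seven flag-like extended Pachner moves via Proposition~\ref{extended_Pachner_moves_suffice}, observe that conditions (\ref{cond1})--(\ref{cond3}) give local invariance under the 2-2 moves and the 1-3 moves (with the $|G|$ choices of the new edge label cancelling the normalization), that (\ref{cond4}) handles the extended 1-2 move on the curve (with $|H|$ cancelling), and then obtain order-independence by the subdivide-and-reverse trick from Theorem~\ref{untwisted}.

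There is, however, one genuine gap, and it is precisely the point to which the paper's own proof is devoted. Conditions (\ref{cond1})--(\ref{cond4}) were derived from the moves with the new vertex inserted at a \emph{specific} place in the vertex ordering (after the vertices of the unsubdivided triangle for the 1-3 moves, before the vertices of the subdivided edge for the extended 1-2 move). The subdivide-and-reverse trick for order-independence requires invariance of the state sum under moves in which the new vertex is inserted at an \emph{arbitrary} admissible position in the ordering: when you undo the moves while ``inserting the original vertices into a desired alternative position,'' the welds you perform remove vertices that need not sit at the end of the ordering. A different insertion position reverses some edge orientations, changes which edges of a triangle are the 01 and 12 edges (hence changes $\tilde\lambda(\sigma)$), and flips some signs $\epsilon(\sigma)$, so the identity required for invariance is literally a different equation. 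One must check, case by case, that each such equation is an instance of (\ref{cond1})--(\ref{cond4}) after a substitution; for example, inserting the new vertex of a 1-3 move between vertices 0 and 1 leads to $\alpha^{-1}(jl^{-1},l)\,\alpha(jl^{-1},lk)\,\alpha^{-1}(j,k)\,\alpha(l,k)=1$, which is an instance of (\ref{cond1}) with $f=jl^{-1}$, $g=l$, $h=k$. Without this verification the appeal to the reordering trick is incomplete. Your closing remark about ``bookkeeping'' and checking $\epsilon(\sigma)$-signs gestures in this direction but does not identify this specific obligation, which is the only non-routine content of the proof once the preceding discussion is granted.
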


\begin{proof}
The one issue that remains to resolve to establish the result is the independence from the ordering of the vertices.

In each of the instances of the 1-3 move above, the new vertex was added to the ordering after the vertices of the unsubdivided triangle, while in the extended 1-2 move it was added earlier in the ordering than the vertices of the subdivided edge of the curve.   In fact, inserting the new vertex elsewhere in the ordering will, in each instance give an equation which, after suitable substitutions, is an instance of one of equations (1) - (4) in the theorem.  

The unedifying calculation we, for the most part, leave to the reader.  By way of example, consider inserting the new vertex in a 1-3 move into the ordering between 0 and 1

$$\begin{tikzpicture}[thick] 
	\node[circle, label=left:{\small0}, fill, inner sep=.8pt, outer sep=0pt] (A) at (0,0){};
	\node[circle, label=above:{\small1}, fill, inner sep=.8pt, outer sep=0pt] (B) at (1.5,2.598075){};
	\node[circle, label=right:{\small2}, fill, inner sep=.8pt, outer sep=0pt] (C) at (3,0){};
	\begin{scope}
		\draw [midarrow={>}] (A)--(B) node[font=\small, midway,left]{$j$};
		\draw [midarrow={>}] (B)--(C) node[font=\small, midway,right]{$k$};
		\draw [midarrow={>}] (A)--(C) node[font=\small, midway,below]{$jk$};
		\node (D) at (1.5,1){$\circlearrowright$};
		\draw [<->] (3.5,1.25)--(4.1,1.25);
	\end{scope}
\end{tikzpicture} 
\begin{tikzpicture}[thick] 
	\node[circle, label=left:{\small0}, fill, inner sep=.8pt, outer sep=0pt] (A) at (0,0){};
	\node[circle, label=above:{\small1}, fill, inner sep=.8pt, outer sep=0pt] (B) at (1.5,2.598075){};
	\node[circle, label=right:{\small2}, fill, inner sep=.8pt, outer sep=0pt] (C) at (3,0){};
	\node[circle, label=left: {\footnotesize0.5}, fill, inner sep=.8pt, outer sep=0pt] (D) at (1.5,0.9){};
	\begin{scope}
		\draw [midarrow={>}] (A)--(B) node[font=\small, midway,left]{$j$};
		\draw [midarrow={>}] (B)--(C) node[font=\small, midway,right]{$k$};
		\draw [midarrow={>}] (A)--(C) node[font=\small, midway,below]{$jk$};
		\draw [midarrow={<}] (B)--(D) node[font=\tiny, midway,right]{$l$};
		\draw [midarrow={<}] (C)--(D) node[font=\tiny, midway,above]{$lk$};
		\draw [midarrow={>}] (A)--(D) node[font=\tiny, midway,right]{$jl^{-1}$};
		\node (E) at (1.1,1.3){$\circlearrowleft$};
		\node (F) at (1.9,1.3){$\circlearrowright$};
		\node (G) at (1.7,0.3){$\circlearrowright$};
	\end{scope}
\end{tikzpicture}$$

\noindent giving the equation

\[ \alpha^{-1}(j,k)=\alpha(jl^{-1},l)\alpha^{-1}(l,k)\alpha^{-1}(jl^{-1},lk)\]

\noindent or equivalently

\[ \alpha^{-1}(jl^{-1},l) \alpha(jl^{-1},lk)\alpha^{-1}(j,k)\alpha(l,k)=1 \]

\noindent which last equation is an instance of  (1) with $f=jl^{-1}$, $g=l$, and $h=k$.

As was the case with the untwisted invariant, and by the same argument -- use flag-like extended Pachner moves to obtain a triangulation with no vertices in common with the original, then undo the moves, putting the vertices into the ordering any desired different order -- invariance under the extended Pachner moves, independent of where in the ordering new vertices are inserted by moves introducing new vertices, suffices to show that the quantity is independent of the ordering of vertices, subject, of course, to the condition that the vertices on the curve occur before those off the curve.
\end{proof}

\section{Examples of initial data}

It remains to see that there are non-trivial examples of the initial data needed for the construction.

\begin{example}\textbf{Restricted 2-cocycles}

Fix a group $\Gamma$ and a 2-cocycle $\hat{\alpha}:\Gamma \times \Gamma \rightarrow K^\times$ for a field $K$.  Let $H$ and $G$ be subgroups of $\Gamma$ and $X$ any subset of $\Gamma$ closed under right multiplication by elements of $G$ and left multiplication by elements of $H$, for instance $X = \Gamma$, $X = \{e\}$ or $X = HG = \{ hg | h\in H,\; g\in G\}$.

Define $\alpha=\hat{\alpha}|_{G\times G}$, $\beta=\hat{\alpha}|_{X\times G}$, and $\gamma=\hat{\alpha}|_{H\times X}$.  Since $\alpha$, $\beta$, and $\gamma$ are now just restrictions on $\hat{\alpha}$, then conditions  $(1)$, $(2)$, and $(3)$ on $\alpha$, $\beta$, and $\gamma$ reduce to instances of the 2-cocycle condition and are consequently satisfied.  Also from the 2-cocycle condition, it follows that 
\begin{align*}
	\hat{\alpha}(\theta, \eta)&=\hat{\alpha}(\theta, \eta w)\hat{\alpha}^{-1}(\theta\eta, w)
\hat{\alpha}(\eta, w)\\
	&=\gamma(\theta,\eta w)\gamma^{-1}(\theta\eta,w)\gamma(\eta,w)
\end{align*}
and
\begin{align*}
	\hat{\alpha}^{-1}(\theta, \eta)&=\hat{\alpha}^{-1}(\theta, \eta x)
\hat{\alpha}(\theta\eta, x)\hat{\alpha}^{-1}(\eta, x)\\
	&=\gamma^{-1}(\theta,\eta x)\gamma(\theta\eta,x)\gamma^{-1}(\eta,x),
\end{align*}
thus condition $(4)$ is satisfied as well.

\end{example}

Our second example can be used to construct initial data for a twisted invariant based on any triple $(H,X,G)$, and is constructed out of families of group characters.

\begin{example}

First, for any triple $(H,X,G)$, the (double) orbit of an element $x \in X$ is given by

\[ [x] := \{ hxg\; | \; h\in H, \; g \in G\} \]

Denote the set of orbits by $\tilde{X}$.  Now, for any field $K$, let the set of multiplicative characters of a group $\Gamma$ valued in $K^\times$ be denoted ${\rm Ch}(\Gamma, K)$.

Any pair of functions $\phi:\tilde{X} \rightarrow {\rm Ch}(G, K)$, $\psi:\tilde{X} \rightarrow {\rm Ch}(H, K)$, then determine a twisting $(\alpha, \beta, \gamma)$ for which $\alpha \equiv 1$ by

\[ \beta(x, g) := \phi([x])(g) \;\;\;\;\; \gamma(\eta, x) := \psi([x])(\eta) \]

Condition (1) holds trivially, while the others follow easily from homomorphic property of characters and the independence of the character applied to the group element from the representative of the orbit in $X$ occuring in the other coordinate of the input.
\end{example}


\begin{thebibliography}{1}

\bibitem{A} Alexander, J., ``The Combinatorial Theory of Complexes,'' {\em Ann. of Math.} {\bf 31} (2) (1930) 292-320.

\bibitem{Atiyah} Atiyah, M., ``Topological Quantum Field Theories,'' {\em Pub. Math. I.H.E.S.} {\bf 68} (1989) 175-186.

\bibitem{CY} Crane, L.\ and Yetter, D.N., ``Moves on Filtered PL Manifolds and Stratified PL Spaces,'' \#arXiv:1404.3142.

\bibitem{Fr} Freed, D.\ and Quinn, F., ``Chern-Simons Theory with Finite Gauge Group,'' {\em Comm. Math. Phys.} {\bf 156} (1993) 435-472.

\bibitem{Friedman} Friedman, G., ``Singular Chain Intersection Homology for Traditional and Super-Perversities,'' {\em Trans. A.M.S.} {\bf 359} (2007) 1977-2019.

\bibitem{Fu} Fuchs, Jurgen, Schweigert, Christoph and Valentino, Alessandro, ``Bicategories for boundary conditions and for surface defects in 3-d TFT,'' {\em Comm. Math. Phys.} {\bf 321}   (2013) 543-575,  DOI: 10.1007/s00220-013-1723-0.

\bibitem{P} Pachner, U., ``PL homeomorphic manifolds are equivalent by elementary shellings,'' {\em Eur. J. Comb.} {\bf 12} (1991) 129-145 .

\bibitem{TV}Turaev, V.G.  and  Viro, O.Ya., ``State sum invariants of 3-manifolds and quantum 6j-symbols'', {\em Topology} {\bf 31} 4, (1992) 865–902, DOI 10.1016/0040-9383(92)90015-A.

\bibitem{W}Wakui, Michihisa, ``On Dijkgraaf-Witten Invariant for 3-manifolds,'' {\em Osaka J. Math.} {\bf 29} (4) (1992) 675-696.

\bibitem{Y}Yetter, D.N., ``Triangulations and TQFTs,'' in {\em Conference Proceedings on Quantum Topology}, (Randy Baadhio, ed.), World Scientific (1993) 354-370.

\end{thebibliography}
\end{document}